\newtheorem{theorem}{Theorem}[section]
\newtheorem{lemma}[theorem]{Lemma}
\newtheorem{corollary}[theorem]{Corollary}
\newtheorem{definition}[theorem]{Definition}
\newtheorem{remark}[theorem]{Remark}
\newtheorem{example}[theorem]{Example}
\newcommand{\bitem}{\begin{itemize}}
\newcommand{\eitem}{\end{itemize}}
\newcommand{\benum}{\begin{enumerate}}
\newcommand{\eenum}{\end{enumerate}}
\newcommand{\beq}{\begin{equation}}
\newcommand{\eeq}{\end{equation}}
\newcommand{\spann}{\mbox{\rm span}}
\newcommand{\supp}{{\text{\rm supp}} \,}
\def\NN{\mathbb{N}}
\def\RR{\mathbb{R}}
\def\CC{\mathbb{C}}
\begin{document}

\title{Spectral Tetris Fusion Frame Constructions}

\author[P.~G.~Casazza]{Peter~G.~Casazza}
\address[P.~G.~Casazza]{Department of Mathematics, University of Missouri, Columbia, Missouri 65211, USA; E-mail: casazzap@missouri.edu}
\author[M.~Fickus]{Matthew Fickus}
\address[M.~Fickus]{Department of Mathematics and Statistics, Air Force Institute of Technology, Wright-Patterson Air Force Base, Ohio 45433, USA; E-mail: Matthew.Fickus@afit.edu}
\author[A.~Heinecke]{Andreas Heinecke}
\address[A.~Heinecke]{Department of Mathematics, University of Missouri, Columbia, Missouri 65211, USA; E-mail: ah343@mail.mizzou.edu}
\author[Y.~Wang]{Yang Wang}
\address[Y.~Wang]{Department of Mathematics, Michigan State University, East Lansing, Michigan 48824, USA; E-mail: ywang@math.msu.edu}
\author[Z.~Zhou]{Zhengfang Zhou}
\address[Z.~Zhou]{Department of Mathematics, Michigan State University, East Lansing, Michigan 48824, USA; E-mail: zfzhou@math.msu.edu}

\begin{abstract}
Spectral tetris is a flexible and elementary method to construct unit norm frames with a given frame operator, having all of its eigenvalues greater than or equal to two. One important application of spectral tetris is the construction of fusion frames. We first show how the assumption on the spectrum of the frame operator can be dropped and extend the spectral tetris algorithm to construct unit norm frames with any given spectrum of the frame operator.  We then provide a sufficient condition for using this generalization of spectral tetris to construct fusion frames with prescribed spectrum for the fusion frame operator and with prescribed dimensions for the subspaces. This condition is shown to be necessary in the tight case of redundancy greater than two.
\end{abstract}

\keywords{Frames, fusion frames, spectral tetris}
\thanks{PGC, MF and AH were supported by DTRA/NSF 1042701 and AFOSR F1ATA00183G003.  Additional support was provided by NSF DMS 1008183 (PGC, AH) and AFOSR F1ATA00083G004, F1ATA0035J001 (MF), NSF DMS-0813750, DMS-08135022 and DMS-1043034 (YW), NSF DMS-0968360 (ZZ).  The views expressed in this article are those of the authors and do not reflect the official policy or position of the United States Air Force, Department of Defense, or the U.S. Government}

\maketitle

%%%%%%%%%%%%%%%%%%%%%%%%%%%%%%%%%%%%%%%%%%%%%%%%%%%
\section{Introduction}

A fusion frame is a sequence of subspaces of a Hilbert space along with a sequence of weights, so that the sequence of weighted orthogonal projections onto these subspaces sums to an invertible operator on the space. Fusion frames, introduced in \cite{CK04} and refined in \cite{CKL08}, have become a subject of interest due to their applicability to problems in distributed processing, sensor networks and a host of other directions. Fusion frames provide resilience to noise and erasures due to, for instance, sensor failures or buffer overflows \cite{Bod07,CK07b,KPCL08,PKC08}, as well as robustness to subspace perturbations \cite{CKL08}, which can happen because of imprecise knowledge of sensor network topology.  For fusion frame applications, we generally need extra structure on the fusion frame, such as prescribing the fusion frame operator or the dimensions of the subspaces, or both.

In this paper we address the question of how to efficiently construct fusion frames with prescribed dimensions of the subspaces and prescribed eigenvalues of the fusion frame operator. Our main tool is the spectral tetris construction for unit norm frames, which we will review in section \ref{prelim}. This construction is limited to the case of frames having a frame operator with spectrum in $[2,\infty)$ and therefore we will first,
in section \ref{GST}, extend spectral tetris to the most general possible case in terms of the prescribed spectrum of the frame operator of the unit norm frame. Precisely, we give a version of spectral tetris, capable of constructing an $M$-element unit norm frame in $\CC^N$ with prescribed eigenvalues $(\lambda_n)_{n=1}^N\subseteq(0,\infty)$ satisfying only the necessary trace condition $\sum_{n=1}^N\lambda_n=M$. Before tackling this case in section \ref{GST2}, we first consider the case of tight frames in section  \ref{GST1}, i.e. we extend the existing construction to unit norm tight frames of redundancy less than two.

In section \ref{greater2}, we give a necessary condition on a prescribed sequence of eigenvalues and dimensions, under which we can use the generalized spectral tetris method to construct a fusion frame having those eigenvalues for its fusion frame operator and having those dimensions for its subspaces. We further show that this condition is also necessary in the case of unit norm tight frames of redundancy of at least two.
%%%%%%%%%%%%%%%%%%%%%%%%%%%%%%%%%%%%%%%%%%%%%%%%%%%
\section{Preliminaries and notation}\label{prelim}

\subsection{Fusion frames}

Let $(e_n)_{n=1}^N$ be the standard unit vector basis of $\RR^N$.
The {\it synthesis operator} of a finite sequence
$(f_m)_{m=1}^M\subseteq\CC^N$ is $F\colon\CC^M\to\CC^N$ given by
\[
Fg=\sum_{m=1}^M\langle g,e_m\rangle f_m,
\]
i.e. $F$ is the $N\times M$ matrix whose $m$-th column is $f_m$. The sequence $(f_m)_{m=1}^M$ is a {\it frame} if its {\it frame operator} $S=FF^*$ satisfies $AI\leq S\leq BI$ for some positive constants $A,B$, where $I$ is the identity on $\CC^N$. In particular, the spectrum of $S$ is positive real.
The sequence is a {\it tight frame} if $A=B$, i.e. if 
\[
Af=\sum_{m=1}^M\langle f,f_m\rangle f_m
\]
for all $f\in\mathbb{C}^N$, or equivalently if
\[
\sum_{m=1}^M\langle f_m,e_n\rangle \overline{\langle f_m,e_{n'}\rangle}=
\begin{cases}
A, & n=n',\\
0, & n\neq n'.
\end{cases}
\]
In the case of a tight frame, the constant $A$ equals $M/N$ and is also called the {\it tight frame bound} or the {\it redundancy} of the frame. The synthesis matrix of the frame $(f_m)_{m=1}^M$ is called \emph{$s$-sparse}, if it has 
$s$ nonzero entries.
A {\it unit norm tight frame} is a tight frame $(f_m)_{m=1}^M$ for which $\|f_m\|=1$ for all $m=1,\ldots,M$. Unit norm tight frames provide Parseval-like decompositions in terms of nonorthogonal vectors of unit norm.
If $(f_m)_{m=1}^M$  is a unit norm frame, the operators $f\mapsto\langle f,f_m\rangle f_m$ arising in the frame operator
\[
Sf=\sum_{m=1}^M\langle f,f_m\rangle f_m
\]
are rank-one orthogonal projections. Fusion frame theory is the study of sums of projections with weights and of arbitrary rank. In particular, a sequence $(W_k,v_k)_{k=1}^K$ of subspaces $(W_k)_{k=1}^K$ of $\CC^N$ and weights $(v_k)_{k=1}^K$ is a {\it fusion frame} if the sequence $(P_k)_{k=1}^K$ of orthogonal projections onto those subspaces satisfies
\[
AI\leq\sum_{k=1}^Kv_k^2P_k\leq BI
\]
for some positive constants $A,B$.  It is a {\it tight fusion frame} if $A=B$. Using the Horn-Klyachko compatibility  inequalities, \cite{MRS10} gives a characterization of the sequences of weights and dimensions of the subspaces for which tight fusion frames exist. This result however can hardly be used in practice, since it involves the computation of the Littlewood-Richardson coefficients of certain associated partitions and moreover does not give a construction of the existing fusion frames.
 In this paper we restrict ourselves to the case where all weights are equal to one and denote the fusion frame by 
$(W_k)_{k=1}^K$.
In this case, the {\it fusion frame operator} is
 $S=\sum_{k=1}^KP_k$. If $D_k$ is the dimension of the range of $P_k$
and $(f_{k,d})_{d=1}^{D_k}$ is an orthonormal basis of the range of $P_k$ then
\[
Sf=\sum_{k=1}^KP_kf=\sum_{k=1}^K\sum_{d=1}^{D_k}\langle f,f_{k,d}\rangle f_{k,d}
\]
for all $f\in\CC^N$. This shows that every fusion frame arises from a classical frame that satisfies additional orthogonality requirements. To be precise, we say that a sequence \smash{$(f_{k,d})_{k=1,d=1}^{K\quad D_k}\subseteq\CC^N$} \textit{generates} a fusion frame $(W_k)_{k=1}^K$ with $\dim W_k=D_k$ for $k=1\ldots,K$ if \smash{$(f_{k,d})_{k=1,d=1}^{K\quad D_k}$} is a frame for $\CC^N$ and $(f_{k,d})_{d=1}^{D_k}$ is orthonormal for $k=1\ldots,K$.

We say that a (fusion) frame has a certain spectrum or certain eigenvalues, if its (fusion) frame operator has this spectrum, respectively these eigenvalues. Note that we are always counting multiplicities of the eigenvalues.
For any frame, the sum of its eigenvalues equals the sum of the lengths of its vectors, i.e. the number of vectors in case we
are dealing with unit norm frames.

Building on the theory of majorization and the Schur-Horn Theorem,~\cite{CFMPS11} 
shows how to explicitly construct every possible frame whose frame operator has a given arbitrary spectrum and whose vectors are of given arbitrary lengths. In this paper we use a much easier algorithm for the construction of unit norm frames
with given spectrum. This algorithm is called spectral tetris and we will discuss it in detail below. We focus our attention on this algorithm, since it is tailor made for the construction of fusion frames, as it constructs frames of vectors, a lot of which are orthogonal to each other due to disjointness of their supports.

\subsection{Spectral Tetris}

The term \textit{spectral tetris} refers to the first systematic method for constructing unit norm tight frames. This construction was introduced in \cite{CFMWZ09} to generate unit norm tight frames in $\RR^N$ for any dimension $N$ and any number of frame vectors $M$, provided that $M\geq 2N$. Choosing all weights to equal $1$,~\cite{CFMWZ09} provides a complete characterization of triples $(N,K,d)$ for which tight fusion frames of $K$ subspaces of equal dimension $d$ exist in $\RR^N$ and gives an elegant algorithm to produce such tight fusion frames for most of those triples.

A straightforward extension to the construction of unit norm frames having a desired frame operator with eigenvalues $(\lambda_n)_{n=1}^N\subseteq[2,\infty)$ satisfying $\sum_{n=1}^N\lambda_n=M$ was introduced in \cite{CCHKP10}. For convenience we review this construction in table~\ref{fig:originalSTC} and will refer to it as the {\em spectral tetris construction} (STC). STC constructs synthesis matrices with unit norm columns, whose rows are pairwise orthogonal and
square sum to the desired eigenvalues. This way, the frame operator is a diagonal matrix, having precisely the desired eigenvalues on its diagonal.
We strongly recommend a glance at \cite{CFMWZ09} or  \cite{CCHKP10} for instructive examples on how the algorithm constructs the desired synthesis matrices by using $2\times 2$ building blocks of the form
\begin{align}\label{2by2}
\begin{bmatrix}
\sqrt{x}&\sqrt{x}\\
\sqrt{1-x}&-\sqrt{1-x}
\end{bmatrix}.
\end{align}
A construction for equi-dimensional fusion frames having eigenvalues as above for their fusion frame operators is given in \cite{CCHKP10}. The sufficient condition for this construction to work, is that the dimension $d$ of the subspaces satisfies $\sum_{n=1}^N\lambda_n=dK$ where $K$ is the number of subspaces and $\lambda_n\leq K-3$ for $n=1,\ldots,N$.

\begin{table}[h]
\centering
\framebox{
\begin{minipage}[h]{6.0in}
\vspace*{0.3cm}
{\sc \underline{STC: Spectral Tetris Construction}}

\vspace*{0.4cm}

{\bf Parameters:}\\[-3ex]
\begin{itemize}
\item Dimension $N\in\NN$.
\item Number of frame elements $M\in\NN$.
\item Eigenvalues $(\lambda_n)_{n=1}^N \subseteq[2,\infty)$ such that
$\sum_{n=1}^N\lambda_n=M$.
\end{itemize}

{\bf Algorithm:}\\[-3ex]
\begin{itemize}
\item[1)] Set $k = 1$.
\item[2)] For $j=1,\ldots,N$ do
\item[3)] \hspace*{1cm} Repeat
\item[4)] \hspace*{2cm} If $\lambda_j < 1$ then
\item[5)] \hspace*{3cm} $f_k = \sqrt{\frac{\lambda_j}{2}} \cdot e_j + \sqrt{1-\frac{\lambda_j}{2}} \cdot e_{j+1}$.
\item[6)] \hspace*{3cm} $f_{k+1} = \sqrt{\frac{\lambda_j}{2}}\cdot e_j - \sqrt{1-\frac{\lambda_j}{2}} \cdot e_{j+1}$.
\item[7)] \hspace*{3cm} $k = k+2$.
\item[8)] \hspace*{3cm} $\lambda_{j+1} = \lambda_{j+1} - (2-\lambda_j)$.
\item[9)] \hspace*{3cm} $\lambda_j = 0$.
\item[10)] \hspace*{2cm} else
\item[11)] \hspace*{3cm} $f_k = e_j$.
\item[12)] \hspace*{3cm} $k = k+1$.
\item[13)] \hspace*{3cm} $\lambda_j = \lambda_j - 1$.
\item[14)] \hspace*{2cm} end.
\item[15)] \hspace*{1cm} until $\lambda_j = 0$.
\item[16)] end.
\end{itemize}

{\bf Output:}\\[-3ex]
\begin{itemize}
\item Unit norm frame $(f_k)_{k=1}^M\subseteq\RR^N$.
\end{itemize}
\vspace*{0.1cm}
\end{minipage}
}
\vspace*{0.2cm}
\caption{The STC algorithm for constructing a unit norm frame with prescribed spectrum in 
$[2,\infty)$.}
\label{fig:originalSTC}
\end{table}

In this paper, we extend the spectral tetris construction to the case of arbitrary eigenvalues for the frame operator and use those frames for the construction of fusion frames with given fusion frame operators and subspaces of not necessarily equal dimensions. 

\begin{definition}\label{spectraltetrisframe}
A frame constructed via the spectral tetris construction STC is called a
\emph{spectral tetris frame}. A fusion frame $(W_k)_{k=1}^K$ is called a \emph{spectral tetris fusion frame} if there is a partition of a spectral tetris frame $(f_{k,d})_{k=1,d=1}^{K\quad D_k}$ such that $(f_{k,d})_{d=1}^{D_k}$ is an orthonormal basis for $W_k$ for every $k=1,\ldots,K$.
\end{definition}

Aside from the fact that spectral tetris frames are easy to construct, their major advantage for applications is the sparsity of their synthesis matrices. This sparsity is dependent on the ordering of the given sequence of eigenvalues for which STC is performed. Note that the original form of the algorithm in \cite{CCHKP10} assumes the sequence of eigenvalues to be in decreasing order. However, this assumption was made only for classification reasons, and it is easily seen that it can be dropped. The sparsest synthesis matrices are achieved if the sequence of eigenvalues $(\lambda_n)_{n=1}^N$ is ordered {\it blockwise}, i.e. if for any permutation $\pi$ of $\{1,\ldots,N\}$ the set of partial sums $\{\sum_{j=1}^s\lambda_j\colon s=1,\ldots,N\}$ contains at least as many integers as the set $\{\sum_{j=1}^s\lambda_{\pi(j)}\colon s=1,\ldots,N\}$. It has been shown in \cite{CHKK10}, that spectral tetris frames are optimally sparse in the sense that given $M\geq 2N$ and a sequence of eigenvalues $(\lambda_n)_{n=1}^N\subseteq[2,\infty)$, the synthesis matrix of the spectral tetris frame having these parameters is sparsest in the class of all synthesis matrices of unit norm frames that have these parameters, provided STC is run for the sequence $(\lambda_n)_{n=1}^N$ rearranged to be ordered blockwise. Note that for tight frames all eigenvalues are equal, and as such, questions of rearranging the order of the eigenvalues do not arise.

%%%%%%%%%%%%%%%%%%%%%%%%%%%%%%%%%%%%%%%%%%%%%%%%%%%
\section{Spectral tetris for arbitrary prescribed spectra}\label{GST}

The spectral tetris construction STC discussed in section \ref{prelim} is capable of constructing unit norm frames, all of
whose eigenvalues are at least $2$. It constructs the synthesis matrices of such frames by successively filling an appropriate sized matrix with ones and $2\times 2$ blocks of the form (\ref{2by2}). If we drop the condition on the spectrum to be contained in $[2,\infty)$, we can in general no longer use just $2\times 2$ blocks for a spectral-tetris-like construction. We will instead work with larger building blocks, constructed from appropriate sized discrete Fourier transform matrices.

\begin{definition}
Given $L\in\NN$, let $\omega=\exp(\frac{2\pi i}{L})$ be a primitive $L$-th root of unity. The (non-normalized)
\emph{discrete Fourier transform (DFT) matrix} in $\mathbb{C}^{L\times L}$ is defined by
\[
F_L=\left(\omega^{jk}\right)_{j,k=0}^{L-1}.
\]
\end{definition}

Let us point out that here we do not normalize $F_L$ by the factor $1/\sqrt{L}$, thus every entry of $F_L$ is of modulus one.
The importance of DFT matrices to our construction stems from the fact that they have orthogonal rows and columns, and that all entries have the same modulus. In the course of the construction we will have to alter the row norms of the DFT matrices by multiplying rows with appropriate constants. While this will destroy the pairwise orthogonality of the columns, it will preserve the pairwise orthogonality of the rows, which is the crucial feature for our construction to work. 

In all the constructions that follow, whenever a $2\times 2$ DFT matrix is used, one might as well use a $2\times 2$ matrix
of the form (\ref{2by2}). It is however not obvious how one could work with real matrices of size larger than $2\times 2$ instead of complex DFT matrices.

It is perhaps most instructive to first look at an example of the construction we are going to introduce in this section.
\begin{example}\label{LRSTCexample}
We construct a $5$-element unit norm tight frame in $\CC^4$. In the following we will use the notation
$\omega_L=\exp(\frac{2\pi i}{L})$.

We can start filling the desired $4\times 5$ synthesis matrix with an altered $2\times 2$ DFT matrix in the upper left corner. The alteration we make, is to multiply the entries of the first row by \smash{$\sqrt{5/8}$} in order to make the first row have the desired norm $\sqrt{5/4}$. To get normalized columns, we multiply the second row of the $2\times 2$ DFT matrix by 
$\sqrt{3/8}$. At this point we have constructed the first row and the first two columns of the desired synthesis matrix:
\begin{align*}
\begin{bmatrix}
\sqrt{\frac{5}{8}}&\sqrt{\frac{5}{8}}&0&0&0\\
\sqrt{\frac{3}{8}}&\sqrt{\frac{3}{8}}\cdot \omega_2&\cdot&\cdot&\cdot\\
0&0&\cdot&\cdot&\cdot\\
0&0&\cdot&\cdot&\cdot\\
\end{bmatrix}.
\end{align*} 
Note that so far we have constructed a matrix whose first two rows are orthonormal, no matter how we keep filling the second row. The second row at this point has norm $\sqrt{3/4}$, while we need to make it have norm $\sqrt{5/4}$. We can not insert another alterated $2\times 2$ DFT matrix in the same fashion as above, since we would have to multiply its first row by the factor
$\sqrt{2/8}$ in order to have the second row of the synthesis matrix square sum to $5/4$, and its second row by the factor
$\sqrt{3/4}$ to get normalized columns. But then the second row of the altered DFT block would already square sum to 
$3/2$ and thus exceed what we desire for the row norms of our synthesis matrix.
We can however, alter a $3\times 3$ DFT matrix in the above fashion. We multiply its first column by $\sqrt{1/6}$ to add the missing weight to the second row of the desired synthesis matrix and make it square sum to $5/4$. Moreover, we multiply the second and the third row of the $3\times 3$ DFT matrix by $\sqrt{5/12}$ to get
\begin{align*}
\begin{bmatrix}
\sqrt{\frac{5}{8}}&\sqrt{\frac{5}{8}}&0&0&0\\
\sqrt{\frac{3}{8}}&\sqrt{\frac{3}{8}}\cdot \omega_2&\sqrt{\frac{1}{6}}&\sqrt{\frac{1}{6}}&\sqrt{\frac{1}{6}}\\
0&0&\sqrt{\frac{5}{12}}&\sqrt{\frac{5}{12}}\cdot \omega_3&\sqrt{\frac{5}{12}}\cdot \omega_3^2\\
0&0&\sqrt{\frac{5}{12}}&\sqrt{\frac{5}{12}}\cdot \omega_3^2&\sqrt{\frac{5}{12}}\cdot \omega_3^4\\
\end{bmatrix}.
\end{align*} 
The latter matrix is the synthesis matrix of the desired frame, since its columns are normalized and its rows are pairwise orthogonal and square sum to $5/4$.
\end{example}

We would like to extend definition~\ref{spectraltetrisframe} to include frames like the one in example~\ref{LRSTCexample}, i.e. in addition we call all frames that are constructed using altered DFT matrices as building blocks of their synthesis matrices \emph{spectral tetris frames}, and fusion frames constructed by using the vectors of any such spectral tetris frames
\emph{spectral tetris fusion frames}.

\subsection{Tight low redundancy spectral tetris frames}\label{GST1}

We now give a version of spectral tetris for the construction of unit norm tight frames of redundancy smaller than $2$, i.e. our goal is to construct an $M$-element unit norm tight frame for $\CC^N$ where $N<M<2N$. 
In most cases, the original construction STC does no longer work if $M<2N$. Indeed, it is shown in~\cite{CHKWZ11}, that in case
$N<M<2N$ and $M$ and $N$ relatively prime, STC can construct a unit norm tight frame of $M$ vectors in $N$ dimensions, if and only if $M=2N-1$.
Since, it is thus no longer possible to always work with $2\times2$ building blocks and ones, we will first address the question: What size alterated DFT-matrices should we use as building blocks of the to be constructed synthesis operator, in order to achieve the best possible sparsity?
We want to develop an algorithm following the lines of example \ref{LRSTCexample}, i.e. we want to compose the desired synthesis matrix out of square blocks whose first and last rows successively overlap.
If we use $K$ square matrices as building blocks of the $N\times M$-synthesis matrix, then due to the overlapping, we have
\[
M=N+(K-1).
\] 
We will first check the best possible sparsity we can get from this set-up, without considering at this moment the question, whether we can alter the DFT blocks in such a way, that the row norms of the synthesis matrix equal $\sqrt{M/N}$ and that the columns are normalized.
Note that,
if we do a spectral-tetris-like construction as in example \ref{LRSTCexample} with $K$ altered DFT matrices of sizes $m_1,\ldots,m_K$,
then the sparsity of the synthesis matrix we get is
$
\sum_{k=1}^Km_k^2.
$

\begin{lemma}\label{Pete}
Let $m_1\leq\cdots\leq m_k,$ and $M$ be positive integers.
The minimizer of 
$\sum_{k=1}^Km_k^2$, subject to the constraints
$\sum_{k=1}^Km_k=M$, and $1\le m_k \le K$ for $k=1,\dots,K$,
is given by
\[ m_1= \cdots =m_r=L,\ \ \mbox{and}\ \ m_{r+1}=\cdots =m_K=L+1,\]
where $L = \lfloor \frac{M}{K}\rfloor$ and $r=K(L+1)-M$.
\end{lemma}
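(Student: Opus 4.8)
The plan is to use a standard smoothing (or exchange) argument, which is the elementary incarnation of the fact that the Schur-convex function $\sum_k m_k^2$ is minimized, among tuples of fixed sum, at the most balanced tuple. Since the constraints force $K\le M\le K^2$, the set of feasible integer tuples is finite and nonempty, so a minimizer exists; I would fix one minimizer $(m_1,\dots,m_K)$ and show that it must have the asserted form.

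The heart of the argument is a single local move. Suppose two entries differ by at least $2$, say $m_i+2\le m_j$. Replace $m_i$ by $m_i+1$ and $m_j$ by $m_j-1$, leaving the other entries unchanged. This preserves $\sum_k m_k=M$, and it keeps every entry in $[1,K]$, since $m_i+1\le m_j-1\le K$ and $m_j-1\ge m_i+1\ge 2$. A direct computation gives
\[
\bigl[(m_i+1)^2+(m_j-1)^2\bigr]-\bigl[m_i^2+m_j^2\bigr]=2(m_i-m_j+1)\le -2<0,
\]
so the objective strictly decreases. Hence at a minimizer no two entries can differ by $2$ or more; equivalently, all entries lie in a set $\{L,L+1\}$ of two consecutive integers, and after reordering so that $m_1\le\cdots\le m_K$ the copies of $L$ come first.

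It then remains to identify $L$ and the number $r$ of entries equal to the smaller value. Taking $L$ to be the minimum entry, every entry is $L$ or $L+1$; if $r$ of them equal $L$, the sum condition $rL+(K-r)(L+1)=M$ simplifies at once to $r=K(L+1)-M$. The bound $0\le r$ gives $M\le K(L+1)$, with the extreme case $M=K(L+1)$ excluded because it would force $r=0$, i.e. no entry equal to the minimum $L$; thus $KL\le M<K(L+1)$, which is exactly $L=\lfloor M/K\rfloor$. (In the boundary case $M=KL$ one gets $r=K$, so all entries equal $L\le K$, and the tuple is feasible; otherwise $L+1\le K$ holds automatically since $M\le K^2$.) This matches the claimed minimizer.

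I expect no serious obstacle: the only points needing care are checking that the exchange move respects the box constraints $1\le m_k\le K$ (verified above) and confirming feasibility of the extremal tuple in the boundary case $M=K^2$, where $r=K$ and every entry equals $K$. The smoothing inequality is the engine of the proof and is a two-line computation; the remainder is bookkeeping with $\lfloor M/K\rfloor$.
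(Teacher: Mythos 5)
Your proof is correct and follows essentially the same route as the paper's: the identical exchange (smoothing) move showing a minimizer cannot have two entries differing by $2$ or more, followed by the arithmetic $rL+(K-r)(L+1)=M$ yielding $r=K(L+1)-M$ and $L=\lfloor M/K\rfloor$. If anything, yours is slightly more careful than the paper's, since you verify that the exchange respects the box constraints $1\le m_k\le K$ and that a minimizer exists over the finite feasible set, points the paper passes over silently.
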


\begin{proof}
We first show that the minimizer above satisfies 
\begin{align}\label{step1}
m_{K}\le m_1+1.
\end{align}
Indeed, assume we have achieved the
minimum sum above and $m_K-m_1 \ge 2$.   Then
\begin{eqnarray*}
(m_1+1)^2 + (m_K-1)^2 &=&  m_1^2+m_K^2 -2(m_k-m_1-1)\\
&<& m_1^2+m_K^2.
\end{eqnarray*}
Therefore, replacing $m_1$ and $m_K$ by $m_1+1$ and $m_K-1$, respectively, we would get a set of numbers satisfying the constraints of the lemma but having smaller square sum. Thus we have shown (\ref{step1}).

Successive application of (\ref{step1}) implies, that there is an $L$ and
an index $1\le r \le K$, so that
\[ m_1= \cdots = m_r = L, \mbox{ and }\\ m_{r+1}= \cdots
= m_K = L+1.\]
Hence,
\[
M =\sum_{r=1}^K m_r= r L+(K-r)(L+1)=K(L+1) - r.
\]
Thus, $r=K(L+1)-M$ and
since $1\le r \le K$, it follows that $L = \lfloor \frac{M}{K} \rfloor$.
\end{proof} 

While STC used building blocks of sizes $1\times 1$ (namely ones) and $2\times 2$, lemma \ref{Pete} suggests that we work with DFT matrices of the sizes $L\times L$ and $(L+1)\times(L+1)$ as building blocks to construct sparse unit norm tight frames of redundancy smaller than two.
Here are the alterations we have to perform on those DFT matrices.

\begin{definition}
Let $N<M<2N$ and $L=\lfloor\frac{M}{M-N+1}\rfloor$. We denote a matrix by $D_L$, or call it a \emph{$D_L$ block}, if it is derived from the DFT matrix $F_L$ by multiplying the entries of the $j$-th row of $F_L$ by $\sqrt{\frac{c_j}{NL}}$, where $c_2=\cdots=c_{L-1}=M$ and 
$1\leq c_1,c_L\leq M$ are integers, and if it has normalized columns. We call $c_1$ and $c_L$ the \emph{first}, respectively the \emph{last} \emph{correction factor} of $D_L$. In the same way, replacing $L$ by $L+1$, we define $D_{L+1}$, i.e. we multiply the $j$-th row of $F_{L+1}$ by $\sqrt{\frac{c_j}{N(L+1)}}$.
\end{definition}

Note that the row norms of a $D_L$ or $D_{L+1}$ block equal $\sqrt{M/N}$, except possibly for their first and last row. We need to collect some information about the first and last correction factors.

\begin{lemma}\label{correctionrange}
Let $N<M<2N$ and $L=\lfloor\frac{M}{M-N+1}\rfloor$. 
\begin{itemize}
\item[(i)]
An integer $c_1$ is a first correction factor of a $D_L$ block, if and only if $L(N-M)+M\leq c_1\leq M$.
\item[(ii)] If $L(N-M)+N>0$, then any $L(N-M)+N\leq c_1\leq M$ is a first correction factor of a $D_{L+1}$ block. 
If $L(N-M)+N\leq 0$, then any $L(N-M)+N\leq c_1\leq L(N-M)+M$ is a first correction factor of a $D_{L+1}$ block. 
\end{itemize}
\end{lemma}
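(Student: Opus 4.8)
The plan is to strip the definition of a $D_L$ (respectively $D_{L+1}$) block down to a single linear constraint on its two free correction factors, and then to read off the admissible values of $c_1$ from the integer box constraints $1\le c_1,c_L\le M$. First I would record how the row scalings act on norms. Since every entry of $F_L$ has modulus one, multiplying the $j$-th row by $\sqrt{c_j/(NL)}$ gives that row entries of modulus $\sqrt{c_j/(NL)}$, hence squared norm $L\cdot\frac{c_j}{NL}=\frac{c_j}{N}$; in particular the middle rows, where $c_j=M$, automatically have norm $\sqrt{M/N}$, which is the remark preceding the lemma. Every column likewise has the same squared norm $\frac1{NL}\sum_{j=1}^{L}c_j$, so the column-normalization requirement is equivalent to the single equation $\sum_{j=1}^{L}c_j=NL$. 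Substituting $c_2=\cdots=c_{L-1}=M$ yields
\[
c_1+c_L=NL-(L-2)M=L(N-M)+2M,
\]
so a $D_L$ block with first correction factor $c_1$ exists exactly when $c_L=L(N-M)+2M-c_1$ also lies in $\{1,\dots,M\}$.

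For part (i) I would then intersect the four inequalities $1\le c_1\le M$ and $1\le c_L\le M$. The conditions $c_L\le M$ and $c_L\ge1$ read $c_1\ge L(N-M)+M$ and $c_1\le L(N-M)+2M-1$, so the admissible set is $\max\{1,L(N-M)+M\}\le c_1\le\min\{M,L(N-M)+2M-1\}$. Both endpoints collapse to the claimed ones precisely when $L(N-M)+M\ge1$, and this is where the definition of $L$ enters: from $L=\lfloor M/(M-N+1)\rfloor\le M/(M-N+1)$ one gets $L(M-N)\le M-L$, whence $L(N-M)+M=M-L(M-N)\ge L\ge1$. This yields the iff statement of (i).

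For part (ii) the computation is identical after replacing $L$ by $L+1$; now $c_2=\cdots=c_L=M$ are the $L-1$ middle rows, and the column condition becomes $c_1+c_{L+1}=N(L+1)-(L-1)M=L(N-M)+N+M$. Imposing $1\le c_1,c_{L+1}\le M$ gives $\max\{1,L(N-M)+N\}\le c_1\le\min\{M,L(N-M)+N+M-1\}$. If $L(N-M)+N>0$ this integer is $\ge1$, so the bounds collapse to $L(N-M)+N\le c_1\le M$ (in fact an exact description), proving the first assertion. If $L(N-M)+N\le0$ the true lower endpoint is merely $1$; here the stated range $L(N-M)+N\le c_1\le L(N-M)+M$, read together with the standing requirement $c_1\ge1$, is a \emph{sufficient} sub-range, and I would verify it directly: any integer $c_1$ in it satisfies $c_1\le L(N-M)+M\le M$, $c_1\ge1$, and its partner $c_{L+1}=L(N-M)+N+M-c_1$ satisfies $N\le c_{L+1}\le M$, so the block is realized.

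The only genuinely delicate point is the floor bookkeeping: one must use $L\le M/(M-N+1)$ together with $L\ge1$ to guarantee $L(N-M)+M\ge1$, which is exactly what makes the $\max$/$\min$ in (i) simplify to the clean bounds. Everything else is elementary linear arithmetic on the two complementary correction factors, and the dichotomy in (ii) is nothing more than the two possibilities for the sign of the integer $L(N-M)+N$; the fact that (ii) is phrased as sufficiency rather than an equivalence reflects that in the second case the stated interval is deliberately a proper subinterval of the full admissible range.
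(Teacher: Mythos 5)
Your proof is correct and follows essentially the same route as the paper's: reduce column normalization to the linear relation $c_1+c_L=L(N-M)+2M$ (resp.\ $c_1+c_{L+1}=L(N-M)+N+M$) and intersect with the box constraints on the two correction factors, with the floor inequality on $L$ ensuring $L(N-M)+M\geq 1$ (the paper gets the same fact from $L<\frac{M}{M-N}$). Your explicit $\max/\min$ bookkeeping and the observation that the second range in (ii) is deliberately a proper subinterval of the full admissible set are slightly more systematic than the paper's presentation, but the substance is identical.
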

\begin{proof}
(i) Let $c_1$ be the first and $c_L$ the last correction factor of a $D_L$ block. We need to show $1\leq c_L \leq M$. Since $D_L$ has normalized columns, we have
\[
c_L=NL-(L-2)M-c_1\geq NL-(L-1)M=L(N-M)+M>0,
\]
where the last inequality holds since $L<\frac{M}{M-N}$.
On the other hand, $c_L\leq M$ implies
\[
NL-(L-2)M-c_1\leq M,
\]
and thus $L(N-M)+M\leq c_1$.

(ii) Let $c_1$ be the first and $c_{L+1}$ the last correction factor of the $D_{L+1}$ block.
The condition that the columns are normalized, is equivalent to
\[
c_{L+1}=N(L+1)-(L-1)M-c_1.
\]
Thus $c_{L+1}\leq M$ is equivalent to 
\[
N(L+1)-(L-1)M-c_1\leq M,
\]
i.e.
\[
L(N-M)+N\leq c_1.
\]
If $L(N-M)+N>0$, then 
\[
c_{L+1}=N(L+1)-(L-1)M-c_1\geq N(L+1)-(L-1)M-M=L(N-M)+N>0.
\]
Now assume $L(N-M)+N\leq 0$ and $c_1<L(N-M)+M$. Then 
\[
c_{L+1}=N(L+1)-(L-1)M-c_1\geq N(L+1)-(L-1)M-L(N-M)-M=N>0.
\]
\end{proof}

%Morally: If $c_1$ is getting to small for small block to work, then it is small enough for big block not to stop early with zero %rows.

\begin{definition}
Let $c_1,c_{L_1}$ be the first, respectively last correction factor of a $D_{L_1}$ block and 
$d_1,d_{L_2}$ be the first, respectively last correction factor of a $D_{L_2}$ block. If 
$c_{L_1}+d_1=M$ then we call $c_1-d_1$ the \emph{stepsize of the $D_{L_1}$ block}.
\end{definition}

\begin{lemma}\label{stepsizes}
Let $N<M<2N$ and $L=\lfloor\frac{M}{M-N+1}\rfloor$. 
The step size of a $D_L$ block is  $L(N-M)+M$,
while the stepsize of a $D_{L+1}$ block is $L(N-M)+N$.
\end{lemma}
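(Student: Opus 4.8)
The plan is to unfold the definition of stepsize and feed it into the column-normalization relation that already surfaces in the proof of Lemma~\ref{correctionrange}. By definition the stepsize of a $D_{L_1}$ block equals $c_1 - d_1$, where $d_1$ is the first correction factor of the neighbouring $D_{L_2}$ block and the two blocks satisfy the overlap condition $c_{L_1} + d_1 = M$. The first thing I would record is that this condition forces $d_1 = M - c_{L_1}$, so the stepsize is in fact independent of which $D_{L_2}$ block sits beside it and may be rewritten purely in terms of the $D_{L_1}$ block as
\[
c_1 - d_1 = c_1 + c_{L_1} - M.
\]
This reduces the entire problem to computing the sum of the first and last correction factors of a single block.

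Next I would read off that sum from the column normalization. Since every entry of $F_L$ has modulus one, scaling the $j$-th row by $\sqrt{c_j/(NL)}$ makes each column square sum to $\frac{1}{NL}\sum_{j=1}^L c_j$, so that normalized columns demand $\sum_{j=1}^L c_j = NL$. For a $D_L$ block the interior correction factors satisfy $c_2 = \cdots = c_{L-1} = M$, which yields $c_1 + c_L = NL - (L-2)M$; this is exactly the identity established at the outset of the proof of Lemma~\ref{correctionrange}(i). Substituting it into the previous display gives
\[
c_1 - d_1 = NL - (L-2)M - M = L(N-M) + M,
\]
the claimed stepsize for a $D_L$ block.

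For a $D_{L+1}$ block the argument runs along identical lines, with only the block-size bookkeeping altered. Normalized columns now require $\sum_{j=1}^{L+1} c_j = N(L+1)$, and since the $L-1$ interior rows again carry correction factor $M$ I obtain $c_1 + c_{L+1} = N(L+1) - (L-1)M$, precisely the relation appearing in the proof of Lemma~\ref{correctionrange}(ii). One more substitution gives
\[
c_1 - d_1 = N(L+1) - (L-1)M - M = L(N-M) + N,
\]
as asserted.

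I do not expect a genuine obstacle here: once the right substitutions are in place the result is a one-line algebraic simplification in each case. The only step that takes a moment of thought is the conceptual one, namely noticing that the stepsize, though defined through a pair of adjacent blocks, actually depends on the $D_{L_1}$ block alone, because the overlap condition pins down $d_1$ via $d_1 = M - c_{L_1}$. After that the two cases are distinguished merely by how many interior correction factors equal $M$.
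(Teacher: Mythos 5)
Your proposal is correct and follows essentially the same route as the paper: both reduce the stepsize to $c_1 + c_{L}$ (respectively $c_1 + c_{L+1}$) minus $M$ via the overlap condition, and then invoke the column-normalization identity $c_1 + c_L = NL - (L-2)M$ (respectively $c_1 + c_{L+1} = N(L+1) - (L-1)M$) to finish with the same one-line algebra. Your version is slightly more explicit than the paper's in deriving the normalization identity from the modulus-one entries of $F_L$ and in noting that the stepsize is independent of the neighbouring block, but these are expository refinements, not a different argument.
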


\begin{proof}
Consider first the case of a $D_L$ block with first  correction factor $c_1$ and last correction factor
$c_L$.
By the normalization of the columns we have $c_L=LN-c_1-(L-2)M$.
So the step size is
\[
c_1- (M-c_L)=L(N-M)+M.
\]
In the case of a  $D_{L+1}$ block with correction factors $c_1$ and $c_{L+1}$, we have by the column normalization
$c_{L+1}=N(L+1)-c_1-(L-1)M$, and thus for the step size
\[
c_1- (M-c_L)=L(N-M)+N.
\]
\end{proof}

In case $\lfloor\frac{M}{M-N+1}\rfloor=1$ we want to use the above mentioned result from~\cite{CHKWZ11}, which characterizes the cases in which STC works, given that $N<M<2N$. To this end, we make the following observation.

\begin{lemma}\label{klemma}
Let $N<M<2N$ be positive integers. Then $ M=2N-1$, if and only if
$
\left\lfloor\frac{M}{M-N+1}\right\rfloor=1.
$
\end{lemma}
\begin{proof}
Let $1\leq k\leq N-1$ such that $M=2N-k$. If $k=1$, then
\[
\left\lfloor\frac{M}{M-N+1}\right\rfloor=\left\lfloor\frac{2N-1}{N}\right\rfloor=1.
\]
If $k\geq 2$, then
\[
\left\lfloor\frac{M}{M-N+1}\right\rfloor=\left\lfloor\frac{2N-k}{N-(k-1)}\right\rfloor\geq2.
\]
\end{proof}

\begin{table}[h]
\centering
\framebox{
\begin{minipage}[h]{6.in}
\vspace*{0.3cm}
{\sc \underline{TDFTST: Tight DFT Spectral Tetris}}

\vspace*{0.4cm}

{\bf Parameters:}
\begin{itemize}
\item Dimension $N\in\NN$.
\item Number of frame elements $M\in\NN$, where $N<M<2N$.
\item $L=\left\lfloor\frac{M}{K}\right\rfloor$, where $K=M-N+1$.
\end{itemize}

\vspace*{0.2cm}

{\bf Algorithm:}
\begin{itemize}
\item[1)] If $L=1$ 
\item[2)]\hspace*{0.5cm}	run STC with $\lambda_n=\frac{M}{N}$ for $n=1,\ldots,N$.
\item[3)] else 
\item[4)]\hspace*{0.5cm}	If $L(N-M)+N\leq0$
\item[5)]\hspace*{1cm}	run subroutine 1.
\item[6)]\hspace*{0.5cm}	else
\item[7)]\hspace*{1cm}	run subroutine 2.
\item[8)]\hspace*{0.5cm} end.
\item[9)]end.
\end{itemize}

\vspace*{0.2cm}

{\bf Output:}
\begin{itemize}
\item Unit norm tight frame $(f_m)_{m=1}^M\subseteq\CC^{N}$.
\end{itemize}
\vspace*{0.1cm}
\end{minipage}
}
\vspace*{0.2cm}
\caption{The TDFTST algorithm for constructing a unit norm tight frame of redundancy less than two.}
\label{fig:TDFTST}
\end{table}

\begin{table}[h]
\centering
\framebox{
\begin{minipage}[h]{6.in}
\vspace*{0.3cm}

{\bf Subroutine 1:}
\begin{itemize}
\item[1)] $m=0, n=1, x=M$
\item[2)] Repeat
\item[3)]\hspace*{0.5cm}	If $x\geq L(N-M)+M$ then
\item[4)]\hspace*{1cm}	$\omega=\exp(\frac{2\pi i}{L})$
\item[5)]\hspace*{1cm}	For $r=m+1,\ldots,m+L$ do
\item[6)]\hspace*{1.5cm}	$
					f_r=\sqrt{\frac{x}{NL}}e_{n}
					+\sqrt{\frac{M}{NL}}\sum_{j=n+1}^{n+L-2}\omega^{(j-n)(r-m-1)}e_j
				$
\item[]\hspace*{1.5cm}	
				$\phantom{f_r=}
					+\sqrt{\frac{NL-M(L-2)-x}{NL}}\omega^{(L-1)(r-m-1)}e_{n+L-1}
				$
\item[7)]\hspace*{1cm}	end.
\item[8)]\hspace*{1cm}	$m=m+L$
\item[9)]\hspace*{1cm} $n=n+L-1$
\item[10)]\hspace*{1cm}	$x=x-L(N-M)-M$
\item[11)]\hspace*{0.5cm}   else
\item[12)]\hspace*{1cm}	$\omega=\exp(\frac{2\pi i}{L+1})$
\item[13)]\hspace*{1cm}	For $r=m+1,\ldots,m+L+1$ do
\item[14)]\hspace*{1.5cm}$	 
					f_r=	\sqrt{\frac{x}{N(L+1)}}e_{n}
					+\sqrt{\frac{M}{N(L+1)}}\sum_{j=n+1}^{n+L-1}\omega^{(j-n)(r-m-1)}e_j
				$
\item[]\hspace*{1.5cm}
				$\phantom{f_r=}
					+\sqrt{\frac{N(L+1)-M(L-1)-x}{N(L+1)}}\omega^{L(r-m-1)}e_{n+L}
				$
\item[15)]\hspace*{1cm}	end.
\item[16)]\hspace*{1cm} $m=m+L+1$
\item[17)]\hspace*{1cm} $n=n+L$
\item[18)]\hspace*{1cm} $x=x-L(N-M)-N$
\item[19)]\hspace*{0.5cm}   end.
\item[20)] until $x=0$.
\end{itemize}
\vspace*{0.1cm}
\end{minipage}
}
\vspace*{0.2cm}
\caption{Subroutine 1 used in the TDFTST algorithm.}
\end{table}

\begin{table}[h]
\centering
\framebox{
\begin{minipage}[h]{6.in}
\vspace*{0.3cm}

{\bf Subroutine 2:}
\begin{itemize}
\item[1)] $m=0, n=1, z=0$
\item[2)]Repeat
\item[3)]\hspace*{0.5cm}	$\omega=\exp(\frac{2\pi i}{L})$
\item[4)]\hspace*{0.5cm}	For $r=m+1,\ldots,m+L$ do
\item[5)]\hspace*{1cm}	$
					f_r=\sqrt{\frac{x}{NL}}e_{n}
					+\sqrt{\frac{M}{NL}}\sum_{j=n+1}^{n+L-2}\omega^{(j-n)(r-m-1)}e_j
				$
\item[]\hspace*{1cm}	
				$\phantom{f_r=}
					+\sqrt{\frac{NL-M(L-2)-x}{NL}}\omega^{(L-1)(r-m-1)}e_{n+L-1}
				$
\item[6)]\hspace*{0.5cm}	end.
\item[7)]\hspace*{0.5cm}	$m=m+L$
\item[8)]\hspace*{0.5cm} $n=n+L-1$
\item[9)]\hspace*{0.5cm}	$x=x-L(N-M)-M$
\item[10)]\hspace*{0.5cm}	$z=z+1$
\item[11)]until $z=K(L+1)-M$
\item[12)]Repeat
\item[13)]\hspace*{0.5cm}	$\omega=\exp(\frac{2\pi i}{L+1})$
\item[14)]\hspace*{0.5cm}	For $r=m+1,\ldots,m+L+1$ do
\item[15)]\hspace*{1cm}$	 
					f_r=	\sqrt{\frac{x}{N(L+1)}}e_{n}
					+\sqrt{\frac{M}{N(L+1)}}\sum_{j=n+1}^{n+L-1}\omega^{(j-n)(r-m-1)}e_j
				$
\item[]\hspace*{1cm}
				$\phantom{f_r=}
					+\sqrt{\frac{N(L+1)-M(L-1)-x}{N(L+1)}}\omega^{L(r-m-1)}e_{n+L}
				$
\item[16)]\hspace*{0.5cm}	end.
\item[17)]\hspace*{0.5cm} $m=m+L+1$
\item[18)]\hspace*{0.5cm} $n=n+L$
\item[19)]\hspace*{0.5cm} $x=x-L(N-M)-N$
\item[20)]\hspace*{0.5cm} $z=z+1$
\item[21)]until $z=K$.
\end{itemize}

\vspace*{0.1cm}
\end{minipage}
}
\vspace*{0.2cm}
\caption{Subroutine 1 used in the TDFTST algorithm.}
\end{table}

We are now ready for the main result of this section. Given $N<M<2N$, we propose the algorithm TDFTST
(table~\ref{fig:TDFTST}) to construct an $M$-element unit norm tight frame in $\CC^N$. We analyze this algorithm in the 
proof of the following theorem.

\begin{theorem} Let $M,N$ be relatively prime and $N<M<2N$. Let $K=M-N+1$, 
$L=\lfloor\frac{M}{K}\rfloor$ 
and $ r=K(L+1)-M$.
Then the synthesis matrix of the sparsest $M$-element spectral tetris unit norm tight frame in $\mathbb{C}^N$ consists of 
$r$ blocks $D_L$  and $(K-r)$  blocks $D_{L+1}$ and can be constructed by DFTSTC. 
Its sparsity is $rL^2+(K-r)(L+1)^2$.
\end{theorem}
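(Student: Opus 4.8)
The plan is to split the statement into a combinatorial optimization part and a realizability part. The combinatorial part identifies the block structure of any spectral tetris unit norm tight frame and minimizes its sparsity; the realizability part checks that the optimal block sizes are actually produced by TDFTST with legitimate correction factors.

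First I would pin down the block structure. Every spectral tetris frame is built from altered DFT blocks arranged in a staircase whose consecutive first and last rows overlap in a single row, as in Example \ref{LRSTCexample}. The coprimality of $M$ and $N$ forces this staircase to be a single unbroken chain: if it broke into disjoint pieces, the synthesis matrix would be block diagonal, each diagonal block would itself be a unit norm tight frame with the common bound $M/N$, hence an $M'$-element frame in $\CC^{N'}$ with $M'/N'=M/N$ and $M'<M$, $N'<N$; since $\gcd(M,N)=1$ the ratio $M/N$ is already in lowest terms and no such proper piece exists. For a single chain of $K$ square blocks of sizes $m_1,\dots,m_K$ the overlaps give $M=\sum_k m_k$ and $N=\sum_k m_k-(K-1)$, so the number of blocks is exactly $K=M-N+1$ and the sizes are positive integers summing to $M$. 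Because each corrected DFT block has all entries nonzero and distinct blocks occupy disjoint columns, the synthesis matrix has exactly $\sum_{k=1}^K m_k^2$ nonzero entries.

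Next I would apply Lemma \ref{Pete} with these $K$ and $M$: the sparsity $\sum_k m_k^2$ over admissible size vectors of length $K$ summing to $M$ is minimized by taking $r$ blocks of size $L$ and $K-r$ blocks of size $L+1$, where $L=\floor{M/K}$ and $r=K(L+1)-M$, with minimal value $rL^2+(K-r)(L+1)^2$. This settles optimality, and it remains to show the optimal size vector is realizable, i.e. that integer correction factors can be chosen so that every column is normalized and every row squares to $M/N$.

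The realizability, carried out by TDFTST, is where I expect the main difficulty. Here I would track the single scalar $x$ (the running first correction factor, starting at $x=M$) through the loop. Placing a $D_L$ or $D_{L+1}$ block fixes its last correction factor by column normalization, and by the definition of stepsize together with Lemma \ref{stepsizes} the variable $x$ drops by exactly $L(N-M)+M$ for a $D_L$ block and by $L(N-M)+N$ for a $D_{L+1}$ block. I would then show by induction that at each stage $x$ lies in the admissible window of Lemma \ref{correctionrange}, so the block inserted is a legitimate $D_L$ or $D_{L+1}$ block with $1\le c_1,c_L\le M$, and that after placing $r$ blocks $D_L$ and $K-r$ blocks $D_{L+1}$ these decrements bring $x$ to $0$, terminating the loop with the final row carrying exactly the missing norm. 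The delicate bookkeeping is to verify that the branch split of TDFTST---subroutine 1 when $L(N-M)+N\le 0$ and subroutine 2 otherwise---matches the two cases of Lemma \ref{correctionrange}(ii), and that each subroutine places precisely $r$ blocks $D_L$ and $K-r$ blocks $D_{L+1}$ (in subroutine 2 all $D_L$ blocks come first, counted by $z$ reaching $K(L+1)-M=r$, whereas subroutine 1 chooses the block type adaptively from the current $x$). Finally, the degenerate case $L=1$ is isolated: by Lemma \ref{klemma} it occurs exactly when $M=2N-1$, and there TDFTST defers to STC run with all eigenvalues equal to $M/N$, which is known to succeed. Assembling the optimality bound, the realizability, and the entry count $\sum_k m_k^2=rL^2+(K-r)(L+1)^2$ completes the proof.
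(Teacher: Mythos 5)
Your proposal is correct and follows essentially the same route as the paper: Lemma~\ref{Pete} pins down the optimal block sizes and the sparsity count $rL^2+(K-r)(L+1)^2$, and realizability is verified by tracking the running first correction factor $x$ through Lemmas~\ref{correctionrange} and~\ref{stepsizes}, with the same case split on the sign of $L(N-M)+N$ (adaptive insertion versus all $D_L$ blocks first) and the degenerate case $L=1$ deferred to STC via Lemma~\ref{klemma}. Your coprimality argument showing the staircase cannot split into disjoint diagonal pieces is a small addition that the paper leaves implicit in its setup, but it does not change the substance of the argument.
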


\begin{proof}
If $L=1$, the algorithm uses STC to construct the sparsest unit norm tight frame. This is possible since by lemma \ref{klemma}, $L=1$ is equivalent to $M=2N-1$, which by~\cite{CHKWZ11} is the characterization of the cases STC can be used, given that $M<2N$.

Now suppose $L\geq2$. In this case the algorithm builds the synthesis matrix of the desired frame as in example
\ref{LRSTCexample} by inserting $D_L$ and $D_{L+1}$ blocks. The last row of a block and the first row of the following block appear in the same row of the synthesis matrix and, while the last correction factor is chosen to ensure normalized columns, the following first correction factor is chosen to guarantee that the rows of the synthesis matrix square sum to 
$M/N$. 
As the algorithm progresses, we keep track of the development of the first correction factors in the variable $x$. We have to ensure that, whenever the algorithm is determined to insert a certain block, the variable $x$ that we computed in order to
make the rows of the synthesis matrix square sum to $M/N$, lies in the range of integers that are possible as first correction factor for a block of the desired size, see lemma \ref{correctionrange}. 
The algorithm starts by letting the first correction factor $x$ of the first block to be inserted into the synthesis matrix equal $M$. The difference between a first correction factor used in the course of the algorithm and the subsequent first correction factor that is being used, is 
the step size determined in lemma \ref{stepsizes}. We have to ensure that eventually
the sum of the step sizes equals $M$, i.e. 
the last correction factor of the final block inserted into the synthesis matrix equals $M$ (in other words, the next first correction factor would be zero, but we have arrived at the point were the algorithm terminates).

Let us begin by recording that the step sizes of $r$ blocks $D_L$ and $(K-r)$  blocks $D_{L+1}$ add up to $M$. Indeed,
\begin{align}\label{gleichung}
r(L(N-M)+M)+(K-r)(L(N&-M)+N)\nonumber\\
&=(KL-r)(N-M)+KN\nonumber\\
&=(KL-K(L+1)+M)(N-M)+KN\\
&=M(N-M+K)\nonumber\\
&=M.\nonumber
\end{align}

To show the above mentioned properties, we now separately consider the cases of $L(N-M)+N$ being positive
and negative.

We begin with the case $L(N-M)+N\leq 0$: In this case the algorithm first inserts a $D_{L}$ block with first correction factor $x=M$. It then tracks the development of the first correction factors by subtracting the step size $L(N-M)+M$, respectively
$L(N-M)+N$, whenever a $D_L$, respectively $D_{L+1}$ block, has been inserted. The algorithm inserts a $D_L$ block whenever possible, that is, whenever the first correction factor at a certain run of the repeat loop is at least $L(N-M)+M$. If and only if during a run of the repeat loop the first correction factor is less then $L(N-M)+M$, the algorithm inserts a $D_{L+1}$ block and subtracts the step size $L(N-M)+N$ from the first correction factor in order to get the first correction factor for the next run of the repeat loop. Note that, if the correction factor $x$ falls below $L(N-M)+M$, it will still be positive and thus can serve as first correction factor for a $D_{L+1}$ block, since by lemma \ref{correctionrange} it suffices that 
$L(N-M)+N\leq x<L(N-M)+M$ and since $L(N-M)+N\leq0$.
Since $L(N-M)+N\leq0$, according to (\ref{gleichung}), the algorithm will thus terminate after inserting $r$ blocks $D_L$ and $(K-r)$  blocks $D_{L+1}$ in some order.

Now consider the case $L(N-M)+N>0$: In this case the algorithm fills the synthesis matrix by first using $r$ blocks $D_L$ followed by $(K-r)$ blocks $D_{L+1}$, in this order.
By (\ref{gleichung}) we have 
\[
M-r(L(N-M)+M)=(K-r)(L(N-M)+N)\geq 0,
\]
and thus
\begin{align*}\label{pos}
M-(r-1)(L(N-M)+M)\geq L(N-M)+M,
\end{align*}
which implies that starting with correction factor $x=M$ and inserting $(r-1)$ blocks $D_L$, i.e.  
subtracting $(r-1)$ step sizes $L(N-M)+M$ from $x$, results in an integer greater than or equal to $L(N-M)+M$, which therefore can serve as the first correction factor of the last $D_L$ block that is being inserted by the algorithm. After inserting this last $D_{L}$ block, $(K-r)$ blocks $D_{L+1}$ are being inserted. By (\ref{gleichung}), the final first correction factor prior to the termination of the algorithm is $L(N-M)+N$, i.e the final last correction factor is $M$. Thus the algorithm succeeds in constructing the desired synthesis matrix.
%Since we have $L(N-M)+N>0$ it follows from
%\[
%KL=(M-N+1)\left\lfloor\frac{M}{M-N+1}\right\rfloor\leq M
%\]
%that
%\begin{align}\label{two}
%(M-KL)(L(N-M)+N)+L(N-M)+M>L(N-M)+M
%\end{align}
%Do I need the following shift argument at some point: If the long jump jumps below the lower bound, then the small jump %jumps below the upper bound. Because the small jump is precisely the size of the shift!
\end{proof}

\begin{remark}
If we drop the assumption on the number $M$ of vectors and the dimension $N$ of the space to be relatively prime, 
we can easily construct a unit norm tight frame by using $\gcd(M,N)$ copies of a matrix that TDFTST produces. 
To be precise, given positive integers $N<M<2N$, let $M'=M/\gcd(M,N)$ and $N'=N/\gcd(M,N)$.
Let $F$ be the $N'\times M'$ synthesis matrix that TDFTST generates for $M'$ vectors in $N'$ dimensions. Then the $N\times M$ matrix
\begin{align*}
\begin{bmatrix}
F&  &          &  \\
  &F&          &  \\  
  &  &\ddots&  \\ 
  &  &          &F
\end{bmatrix},
\end{align*} 
built from $\gcd(M,N)$ copies of $F$ as its diagonal and filled with zeros elsewhere, is the synthesis matrix of an
$M$-element unit norm tight frame in $\CC^N$. Note that this is the sparsest unit norm tight frame one can construct by using altered DFT matrices as building blocks.

It is this construction that we refer to as $TDFTST$ in case that the number of vectors and the dimension are not relatively prime.
\end{remark}

%\begin{corollary}
%Given $M,N\in\NN$ such that $N<M<2N$, let $M'=M/\gcd(M,N)$ and $N'=N/\gcd(M,N)$.
%Let $K=M'-N'+1$, 
%$L=\lfloor\frac{M'}{K}\rfloor$ 
%and $ r=K(L+1)-M'$.
%Then the synthesis matrix of the sparsest $M$-element DFT-spectral tetris unit norm tight frame in $\mathbb{C}^N$ %consists of $r\cdot\gcd(M,N)$  blocks $D_L$  and $(K-r)\cdot\gcd(M,N)$  blocks $D_{L+1}$ and can be constructed by %DFTSTC. 
%Its sparsity is $(rL^2+(K-r)(L+1)^2)\gcd(M,N)$.
%\end{corollary}

\subsection{General spectral tetris frames}\label{GST2}

We are now dropping any conditions on the sequence of prescribed eigenvalues, besides the frame condition,
implying that the spectrum is positive, and the trace condition, implying that the sum of the eigenvalues has to equal the number of frame vectors of the unit norm frame to be constructed. We discuss a version of spectral tetris, which is capable of constructing a unit norm frame of $M$ elements in $\CC^{N}$ with such spectrum, for any $M\geq N$. Note that now
questions of how to order the eigenvalues come into play again and that therefore we will in general not get the sparsest
frames possible from a spectral tetris like construction.
The version of spectral tetris for this most general set up is the algorithm DFTST
presented in table~\ref{fig:mostgeneral}.
DFTST is a combination of the ideas developed so far and we will analyze it in the proof of theorem
\ref{finaltheorem}.

\begin{table}[h]
\centering
\framebox{
\begin{minipage}[h]{6.in}
\vspace*{0.3cm}
{\sc \underline{DFTST: DFT Spectral Tetris}}

\vspace*{0.4cm}

{\bf Parameters:}
\begin{itemize}
\item Dimension $N\in\NN$.
\item Number of frame elements $M\in\NN$.
\item Eigenvalues $\lambda_1\geq\cdots\geq\lambda_N>0$ such that
$\sum_{n=1}^N\lambda_n=M$.
\end{itemize}

\vspace*{0.2cm}

{\bf Algorithm:}
\begin{itemize}
\item[1)] Set $k = 1$.
\item[2)] For $j=1,\ldots,N$ do
\item[3)] \hspace*{0.5cm} Repeat
\item[4)] \hspace*{1cm} $S=\{L\in\{1,\ldots,N-j+1\}\colon L\leq\sum_{i=j}^{j+L-1}\lambda_i\}$.
\item[5)] \hspace*{1cm} If $S=\emptyset$ or if $M-k=N-j$ and 
$\sum_{i=1}^{j+\min S -1}\lambda_i\not\in\NN$ then
\item[6)] \hspace*{1.5cm} $L=M-k+1$.
\item[7)] \hspace*{1.5cm} $\omega=\exp(\frac{2\pi i}{L})$.
\item[8)]\hspace*{1.5cm}	For $r=k,\ldots,k+L-1$ do
\item[9)]\hspace*{2cm}	$
					f_r=\sqrt{\frac{\lambda_j}{L}}\cdot
					\sum_{i=j}^N\omega^{(i-j)(r-k)}e_i$.
\item[10)]\hspace*{1.5cm}	end.
\item[11)] \hspace*{1.5cm} $\lambda_j = \lambda_{j+1}=\cdots = \lambda_N = 0$.
\item[12)] \hspace*{1cm} else
\item[13)] \hspace*{1.5cm} If $\min S=1$ then
\item[14)] \hspace*{2cm} $f_k = e_j$.
\item[15)] \hspace*{2cm} $k = k+1$.
\item[16)] \hspace*{2cm} $\lambda_j = \lambda_j - 1$.
\item[17)] \hspace*{1.5cm} else
\item[18)] \hspace*{2cm} $L=\min S$.
\item[19)] \hspace*{2cm} $\omega=\exp(\frac{2\pi i}{L})$.
\item[20)]\hspace*{2cm}	For $r=k,\ldots,k+L-1$ do
\item[21)]\hspace*{2.5cm}	$
					f_r=\sqrt{\frac{\lambda_j}{L}}\cdot
					\sum_{i=j}^{j+L-2}\omega^{(i-j)(r-k)}e_i
				$
\item[]\hspace*{2.5cm}	
				$\phantom{f_r=}
					+\sqrt{1-\frac{1}{L}\sum_{i=j}^{j+L-2}\lambda_j}\cdot\omega^{(L-1)(r-k)}e_{j+L-1}.
				$
\item[22)]\hspace*{2cm}	end.
\item[23)]\hspace*{2cm}	$k=k+L$.
\item[24)] \hspace*{2cm} $\lambda_j  = \lambda_{j+1}=\cdots = \lambda_{j+L-2} = 0$.
\item[25)] \hspace*{2cm} $\lambda_{j+L-1} =\sum_{i=j}^{j+L-1}\lambda_i-L$.
\item[26)] \hspace*{1.5cm} end.
\item[27)] \hspace*{0.5cm} until $\lambda_j = 0$.
\item[28)] end.
\end{itemize}

\vspace*{0.2cm}

{\bf Output:}
\begin{itemize}
\item Unit norm frame $(f_m)_{m=1}^M\subseteq\CC^{N}$.
\end{itemize}
\vspace*{0.1cm}
\end{minipage}
}
\vspace*{0.2cm}
\caption{The DFTST algorithm for constructing a unit norm frame with prescribed spectrum in $(0,\infty)$.}
\label{fig:mostgeneral}
\end{table}

\begin{definition}
Given positive reals $\lambda_1,\ldots,\lambda_L$
we call a matrix a \emph{general $D_L$ block for $\lambda_1,\ldots,\lambda_L$}, if it is derived from the DFT matrix $F_L$ by multiplying the entries of the $j$-th row of $F_L$ by 
$\sqrt{\frac{\lambda_j}{L}}$ for $j=2,\ldots,L-1$ and the entries of the first and last row of 
$F_L$ by $\sqrt{\frac{c_1}{L}}$, respectively $\sqrt{\frac{c_L}{L}}$, where $0<c_1\leq\lambda_1$ and $0<c_L\leq\lambda_L$, and if it has normalized columns. We call $c_1$ and $c_L$ the 
\emph{first}, respectively \emph{last correction factor}.
\end{definition}

\begin{theorem}\label{finaltheorem}
Let $M\geq N$ be positive integers and $(\lambda_n)_{n=1}^N\subseteq(0,\infty)$ in decreasing order such that $\sum_{n=1}^{N}\lambda_n=M$. Then the algorithm DFTST constructs an $M$-element unit norm frame with eigenvalues
$(\lambda_n)_{n=1}^N$.
\end{theorem}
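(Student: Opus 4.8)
The plan is to show that DFTST builds the $N\times M$ synthesis matrix $F$ as a left-to-right concatenation of altered DFT blocks (general $D_L$ blocks) and singleton columns $e_j$, overlapping consecutively in exactly one row, and then to verify the three properties that identify $F$ as the synthesis matrix of the desired frame: every column has unit norm, the rows are pairwise orthogonal, and the $n$-th row square-sums to $\lambda_n$. Since these give $FF^*=\mathrm{diag}(\lambda_1,\ldots,\lambda_N)$, they are exactly what is needed. I would organize everything around a loop invariant maintained at the top of the \emph{Repeat} loop (line 3): writing $\lambda_i$ for the current (residual) weights, the invariant asserts that columns $1,\ldots,k-1$ are already unit vectors, that the rows of the part built so far are pairwise orthogonal, that the square-sum realized so far in row $i$ plus the residual $\lambda_i$ equals the original eigenvalue, and --- crucially --- the conservation identity $\sum_{i=j}^N\lambda_i=M-k+1$, which records that the residual weight equals the number of unfilled columns.

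First I would treat the generic step, where $L=\min S\geq 2$ (lines 17--25). The block placed is a general $D_L$ block for the residual weights $\lambda_j,\ldots,\lambda_{j+L-1}$ with first correction factor $c_1=\lambda_j$ and last correction factor $c_L=L-\sum_{i=j}^{j+L-2}\lambda_i$. The definition of $S$ gives $L\in S$, i.e. $\sum_{i=j}^{j+L-1}\lambda_i\geq L$, which rearranges to $c_L\leq\lambda_{j+L-1}$; minimality of $L$ gives $L-1\notin S$, i.e. $\sum_{i=j}^{j+L-2}\lambda_i<L-1$, whence $c_L>1>0$. Thus the correction factors are admissible, the block columns are normalized by construction, and the residual returned to row $j+L-1$ in line 25 is $\lambda_{j+L-1}-c_L=\sum_{i=j}^{j+L-1}\lambda_i-L\geq 0$, preserving the invariant; the singleton case $\min S=1$ (lines 13--16) is immediate. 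Row orthogonality then follows globally from a single observation: each column of $F$ belongs to exactly one block, so the inner product of two distinct rows receives a nonzero contribution only from a block containing both rows, and within any one block the rows are scaled DFT rows, hence orthogonal; since consecutive blocks overlap in a single row, no two distinct rows lie in a common block. The same bookkeeping shows that row $n$ square-sums to $\lambda_n$, because the residual-plus-realized quantity telescopes to the original eigenvalue once $\lambda_n$ reaches $0$.

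The main obstacle is the terminal fallback branch (lines 5--11), which guards against stranding an unrealizable residual. The conservation identity forces $\sum_{i=j}^N\lambda_i=M-k+1$; combined with $S=\emptyset$ this shows the residual weight is strictly smaller than the number of remaining rows, while $M-k=N-j$ marks the moment at which remaining columns equal remaining rows. In either triggering case I would show the correct move is a single DFT block of size $L=M-k+1$ spanning all remaining rows, scaled so that row $i$ square-sums to its residual $\lambda_i$; the identity $\sum_{i=j}^N\lambda_i=L$ is precisely what makes this block have unit-norm columns and exhaust every remaining weight simultaneously. The delicate point, where I expect the real work to lie, is the integrality condition $\sum_{i=1}^{j+\min S-1}\lambda_i\notin\NN$: I would argue that when this partial sum fails to be an integer exactly at the step where $M-k=N-j$, a generic block would split a weight and leave a fractional residual that can never again be matched to an integer number of columns, so the tall block is both necessary and sufficient to finish. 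Finally, termination and the exact count of $M$ vectors follow from the conservation identity, since every iteration strictly increases $k$ and the process halts exactly when the residual weight --- equivalently the number of unfilled columns --- reaches zero.
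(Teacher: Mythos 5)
Your plan follows essentially the same route as the paper's proof, which is likewise a guided tour of the algorithm: justify each generic insertion as a general $D_L$ block via the two correction-factor inequalities (your derivation of $c_L\leq\lambda_{j+L-1}$ from $L\in S$ and $c_L>1$ from minimality of $L$ is exactly right), get row orthogonality from the block structure, and use a conservation identity to make the terminal block work. In several respects you are more careful than the paper: the identity $\sum_{i=j}^{N}\lambda_i=M-k+1$ is never written down in the paper, although it is precisely what makes the terminal block have unit-norm columns and the correct row sums, and the paper checks neither orthogonality nor termination explicitly. (One phrasing slip: two distinct rows certainly can meet a common block; what saves you is your preceding clause, that their restrictions to that block's columns are distinct scaled DFT rows, hence orthogonal.)

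However, at the very point you flag as delicate, your sketch proves only half of what is needed, and the missing half is false, so the argument cannot be completed for the algorithm as written. You argue that a non-integer partial sum at a step with $M-k=N-j$ forces the tall block; that is correct, since a non-integer leftover is nonzero, and then the incomplete rows outnumber the remaining columns, which no sequence of square blocks (or a legal tall block) can ever cover. But your induction also needs the converse guarantee: whenever line 5 does \emph{not} fire, the generic step keeps the construction feasible. This fails when the leftover residual is a nonzero \emph{integer}. Take $N=3$, $M=3$, $(\lambda_n)=(2,\tfrac12,\tfrac12)$. At $k=1$, $j=1$ one has $\min S=1$, $M-k=N-j=2$, and $\sum_{i=1}^{1}\lambda_i=2\in\NN$, so DFTST proceeds generically: $f_1=e_1$, $f_2=e_1$, and then, since $S=\emptyset$ at $j=2$, it emits the single column $f_3=\sqrt{\tfrac12}\,e_2+\sqrt{\tfrac12}\,e_3$. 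Rows $2$ and $3$ are not orthogonal, the frame operator is
\[
\begin{bmatrix}2&0&0\\0&\tfrac12&\tfrac12\\0&\tfrac12&\tfrac12\end{bmatrix},
\]
with eigenvalues $2,1,0$, so the output is not even a frame; the correct move was the tall $3\times3$ block at $k=1$, which the integrality test blocks. Thus the dichotomy that matters is zero versus nonzero leftover, i.e.\ the trigger must be $\sum_{i=1}^{j+\min S-1}\lambda_i\neq\min S$, not $\notin\NN$; with that repair your invariant $N-j\leq M-k$ is maintained and your argument goes through. For what it is worth, the paper's own proof has exactly the same hole: it describes the trigger by its intent (``we would now have more rows than columns left to fill'') without checking that the formal condition in line 5 implements that intent, which it does not. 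Your instinct about where the real work lies was exactly right; carried out honestly, it uncovers a bug in DFTST rather than a proof of the theorem as stated.
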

\begin{proof}
We construct the desired synthesis matrix successively one vector at a time by filling it with general $D_L$ blocks, similar as seen in example \ref{LRSTCexample}, only now each time we check for the minimal size of a general $D_L$ block that can be inserted. We do so by determining the set
$S$ in line $4)$ of the algorithm. Indeed, for $L\in S$ there does exist a general $D_L$ block
for $\lambda_j,\ldots,\lambda_{j+L-1}$, since the condition on $L$ is equivalent to
\[
1-\sum_{i=j}^{j+L-2}\frac{\lambda_i}{L}\leq\frac{\lambda_{j+L-1}}{L},
\]
and thus implies that the choice of $\lambda_j$ as the first correction factor results in a last correction factor, which is uniquely determined by the normalization condition of the columns, which is smaller or equal to $\lambda_{j+L-1}$. 

Since we desire sparsity as a key property of the synthesis matrix, we choose the smallest 
possible such $L$, i.e. the minimum of $S$, in case $S$ is not empty. If this minimum is $1$ we just insert a $1$, or in other words a $D_1$ block, i.e. a standard unit vector. This is done in lines $13)$ to $16)$ of the algorithm. Otherwise we insert a general $D_{\min S}$ block, see lines
$18)$ to $25)$.

The algorithm terminates after one performance of the lines $6)$ to $11)$. In lines $6)$ to $11)$,
the first $N-j+1$ rows of a general $D_{M-k+1}$ block are being inserted. This is done in case 
$S=\emptyset$, i.e. if we can not insert a general $D_L$ block for any $L\in\{1,\ldots,N-j+1\}$, or if
after inserting a general $D_{\min S}$ block, we
would not be able to continue a spectral tetris like construction due to the fact that we would now have more
rows than columns left to fill in the to be constructed synthesis matrix.
\end{proof}

%%%%%%%%%%%%%%%%%%%%%%%%%%%%%%%%%%%%%%%%%%%%%%%%%%%
\section{Fusion frames with prescribed eigenvalues and prescribed dimensions}\label{greater2}

Let $M\geq N$ be positive integers and $(\lambda_n)_{n=1}^N\subseteq(0,\infty)$ such that $\sum_{n=1}^N\lambda_n=M$. Given a sequence of dimensions, we ask the question of whether and how we can find a spectral tetris fusion frame for $\RR^N$ whose subspaces have those prescribed dimensions and whose fusion frame operator has the eigenvalues $(\lambda_n)_{n=1}^N$. 

To get started, consider the following example of integer eigenvalues:
\[
(\lambda_n)_{n=1}^6=(4,4,3,3,2,2). 
\]
Given this sequence of eigenvalues, the spectral tetris frame in $\RR^6$ consists only of standard unit vectors:
\[
S=\{e_1,e_1,e_1,e_1,e_2,e_2,e_2,e_2,e_3,e_3,e_3,e_4,e_4,e_4,e_5,e_5,e_6,e_6\}.
\]
The question we are asking above now takes the following form. We want to partition $S$ into sets of pairwise orthonormal vectors, i.e. each set of the partition should not contain more than one copy of any standard unit vector. What sizes can these sets have? We start by considering the partition $S=\bigcup_{n=1}^4P_n$, where
\begin{align*}
P_1&=\{e_1,e_2,e_3,e_4,e_5,e_6\},\\
P_2&=\{e_1,e_2,e_3,e_4,e_5,e_6\},\\
P_3&=\{e_1,e_2,e_3,e_4\},\\
P_4&=\{e_1,e_2\}.
\end{align*}
The sets of this partition have sizes $6,6,4$ and $2$. To get a different partition we cannot take any vector from $P_i$ and put it into $P_j$ if $i>j$, since this would destroy the orthonormality of the sets. We can on the other hand take certain vectors out of $P_i$ and put them into $P_j$ if $i<j$, without destroying the orthonormality of the sets. By doing so, we can for example easily find a partition into orthonormal sets of sizes $6,5,4$ and $3$. But, it is not possible to find a partition into orthonormal sets of sizes $6,6,5$ and $1$. The sequence $6,6,4,2$ majorizes the sequences of sizes of orthonormal sets which we can partition $S$ into. Let us recall the notion of majorization. 

\begin{definition}
Given $a=(a_n)_{n=1}^N\in\RR^N$, denote by $a^{\downarrow}\in\RR^N$ the vector obtained by rearranging the coordinates of $a$ in decreasing order. If $(a_n)_{n=1}^N,(b_n)_{n=1}^N\in\RR^N$, we say $(a_n)_{n=1}^N$ \emph{majorizes} $(b_n)_{n=1}^N$, denoted by $(a_n)\succeq (b_n)$, if $\sum_{n=1}^ma_n^{\downarrow}\geq\sum_{n=1}^mb_n^{\downarrow}$ for all $m=1,\ldots,N-1$ and $\sum_{n=1}^Na_n=\sum_{n=1}^Nb_n$.
\end{definition}

We will also use the notion of majorization between tuples of different length,  by agreeing to add zero entries to the shorter tuple, in order to have tuples of the same length.

We can use the idea of the above example to construct spectral tetris fusion frames in the general case of non integer eigenvalues. As above, we will determine a sequence of numbers depending on the given eigenvalues $(\lambda_n)_{n=1}^N$ and check whether or not this sequence majorizes the given sequence of dimensions. Again, the sequence we are going to determine will be the sequence of dimensions of a certain fusion frame for $\RR^N$ having the eigenvalues $(\lambda_n)_{n=1}^N$. We now introduce this fusion frame.

\begin{definition}
Let $M\geq N$ be positive integers and let $(\lambda_n)_{n=1}^N\subseteq(0,\infty)$ have the property that $\sum_{n=1}^N\lambda_n=M$. The fusion frame constructed by the algorithm RFF presented in table~\ref{fig:RFF} is called the \emph{reference fusion frame} for the eigenvalues $(\lambda_n)_{n=1}^N$.
\end{definition}

\begin{table}[h]
\centering
\framebox{
\begin{minipage}[h]{6.0in}
\vspace*{0.3cm}
{\sc \underline{RFF: Reference Fusion Frame}}

\vspace*{0.4cm}

{\bf Parameters:}
\begin{itemize}
\item Eigenvalues $(\lambda_n)_{n=1}^N\subseteq(0,\infty)$ such that
$\sum_{n=1}^N\lambda_n=M\in\NN$.
\end{itemize}

\vspace*{0.2cm}

{\bf Algorithm:}
\begin{itemize}
\item[1)] 
Use appropriate spectral tetris algorithm for $(\lambda_n)_{n=1}^N$ to get frame\\ $F=(f_m)_{m=1}^M$.
\item[2)] $t=$ maximal support size of the rows of $F$.
\item[3)]  $S_i=\emptyset$ for $i=1,\ldots,t$.
\item[4)] $k = 0$.
\item[5)] Repeat
\item[6)] \hspace*{0.5cm} $k=k+1$.
\item[7)] \hspace*{0.5cm} $j=\min\{1\leq r\leq t\colon
                                            \supp f_k\cap \supp f_s=\emptyset\text{ for all } f_s\in S_r\}$.
\item[8)] \hspace*{0.5cm} $S_j=S_j\cup\{f_k\}$.
\item[9)]  until $k=M$.
\end{itemize}

\vspace*{0.2cm}

{\bf Output:}
\begin{itemize}
\item Fusion frame $(V_i)_{i=1}^t$, where $V_i=\spann S_i$ for $i=1,\ldots,t$.
\end{itemize}
\vspace*{0.01cm}
\end{minipage}
}
\vspace*{0.2cm}
\caption{The RFF algorithm for constructing the reference fusion frame.}
\label{fig:RFF}
\end{table}

An example of RFF is included in the remarks following the proof of Proposition~\ref{nontight}. 

We will now use the reference fusion frame for $(\lambda_n)_{n=1}^N$ to tackle the question, whether or not a fusion frame for $\CC^N$, with a certain fusion frame operator and certain dimensions of the subspaces, is constructible via spectral tetris. In case it is constructible, the proof describes an algorithm to construct it.
In the proof we will use the following notation.
\begin{definition}
Let $S$ be a set of vectors in $\CC^N$, and $s\in S$. We say that a subset $C\subseteq S$ is a \emph{chain in $S$ starting at $s$}, if $s\in S$ and the support of any element in $S$ intersects the support of some other element of $S$. We say that $C$ is a \emph{maximal chain in $S$ starting at $s$} if $C$ is not a proper subset of any other chain  in $S$ starting at $s$.
\end{definition}

\begin{theorem}
\label{nontight}
Let $M\geq N$ be positive integers, $(\lambda_n)_{n=1}^N\subseteq(0,\infty)$ and let $(d_i)_{i=1}^D\subseteq\NN$ such that $\sum_{n=1}^N\lambda_n=\sum_{n=1}^Dd_n=M$. Let $(V_n)_{n=1}^t$ be the reference fusion frame for $(\lambda_n)_{n=1}^N$. If $(\dim V_n)\succeq(d_n)$, then there exists a spectral tetris fusion frame $(W_n)_{n=1}^D$ for $\CC^N$ with $\text{dim }W_n=d_n$ for $n=1,\ldots,D$ and eigenvalues $(\lambda_n)_{n=1}^N$.
\end{theorem}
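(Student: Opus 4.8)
The plan is to reduce the statement to a purely combinatorial redistribution problem and then to solve that problem by an augmenting-chain argument. The first step is the key reformulation. Every frame vector has unit norm, and two such vectors are automatically orthogonal as soon as their supports are disjoint, whereas any two vectors coming from a common (altered) DFT block share their entire support and are in general non-orthogonal. Hence a collection of columns of the synthesis matrix is an orthonormal set \emph{if and only if} its members have pairwise disjoint supports. Moreover, the eigenvalues of the fusion frame operator $S=\sum_k P_k$ coincide with those of $FF^*$, where $F$ is the synthesis matrix produced by the appropriate spectral tetris algorithm (STC, TDFTST, or DFTST, according to the prescribed spectrum); these eigenvalues are fixed once $F$ has been built and do not depend on how the columns are grouped. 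It therefore suffices to partition the $M$ columns of $F$ into $D$ groups, each consisting of vectors with pairwise disjoint supports, so that the $n$-th group has exactly $d_n$ elements; setting $W_n$ to be the span of the $n$-th group then yields a spectral tetris fusion frame with $\dim W_n=d_n$ and the prescribed eigenvalues.

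Next I would take the reference fusion frame as the starting partition. Because RFF greedily assigns each column to the lowest-indexed admissible group, the sets $S_1,\dots,S_t$ are disjoint-support groups with $\dim V_1\geq\cdots\geq\dim V_t$, so that $(\dim V_n)$ is the maximally concentrated achievable size sequence. Since $(\dim V_n)$ and $(d_n)$ are integer vectors of equal sum with $(\dim V_n)\succeq(d_n)$ (note that equality of sums forces $D\geq t$, so we only ever spread vectors into more and smaller groups), I would invoke the elementary description of the dominance order: $(d_n)$ can be obtained from $(\dim V_n)$ by finitely many elementary transfers, each lowering one part by $1$ and raising a strictly smaller part by $1$ while keeping the vector sorted, i.e. by moving single boxes downward in the associated Young diagram. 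This reduces the whole theorem to realizing one such transfer at the level of the partition.

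The crux is thus the following realization lemma: given any partition of the columns of $F$ into disjoint-support groups in which a group $S_p$ is strictly larger than a group $S_q$, one can modify the partition so that $|S_p|$ decreases by one, $|S_q|$ increases by one, all other group sizes are unchanged, and every group still has pairwise disjoint supports. This is where the chain structure introduced before the theorem enters. Because distinct maximal chains occupy disjoint blocks of coordinates, the surplus $|S_p|-|S_q|>0$ must already be witnessed inside a single chain $C$ on which $S_p$ contributes more vectors than $S_q$. I would then run an alternating (augmenting) argument along $C$: select a vector of $S_p$ lying in $C$ and attempt to move it into $S_q$; if it conflicts with vectors of $S_q$, reassign a conflicting $S_q$-vector further along the chain and repeat. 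The interval geometry inside a chain, namely that consecutive altered DFT blocks overlap in exactly one coordinate, constrains the conflicts at each coordinate and forces the reassignment to propagate monotonically along $C$ until it reaches a coordinate at which $S_q$ has a free slot, at which point the transfer closes.

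I expect this augmenting step to be the main obstacle: the work lies in verifying that the surplus localizes in a single chain, that the alternating reassignment is forced in one direction by the overlap-in-one-coordinate geometry, and that it therefore terminates at a free end without cycling. By contrast, the reformulation in the first paragraph and the majorization bookkeeping in the second are routine once the orthonormality-equals-disjoint-support dictionary is in place. Applying the realization lemma once for each elementary transfer converts the reference partition into a disjoint-support partition with group sizes $(d_n)$, and taking spans of the groups produces the required spectral tetris fusion frame $(W_n)_{n=1}^D$ for $\CC^N$ with $\dim W_n=d_n$ and eigenvalues $(\lambda_n)_{n=1}^N$.
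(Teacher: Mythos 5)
Your proposal is correct and takes essentially the same route as the paper: starting from the reference partition, both arguments realize unit transfers of dimension between two groups by exchanging the two groups' portions of a maximal chain in their union (your ``augmenting pass'' along $C$ is exactly this swap), using that a chain alternates between the two groups so that a positive surplus is witnessed in some single chain. The only difference is bookkeeping: you schedule the transfers via the standard elementary-transfer decomposition of the dominance order, whereas the paper runs an explicit induction that decreases $\sum_{i}\,\bigl|\,|W_i|-d_i\,\bigr|$ (choosing the last mismatched index and the last oversized index before it) and re-verifies majorization at each step; the two schedulings are interchangeable.
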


\begin{proof}
We show how to iteratively construct the desired fusion frame $(W_n)_{n=1}^D$ in case the majorization condition holds.
Let $t$ and $S_1,\ldots,S_t$ be given by RFF for $(\lambda_n)_{n=1}^N$.
Let $W^0_i=S_i$ for $i=1,\ldots,t$. We add empty sets if necessary to obtain a collection $(W^0_i)_{i=1}^D$ of $D$ sets. If $\sum_{i=1}^{D}||W^0_i|-d_i|=0$ then the sets $(W^0_i)_{i=1}^{D}$ span the desired fusion frame. Otherwise, starting from $(W^0_i)_{i=1}^{D}$, we will construct the spanning sets of the desired fusion frame. 

Let
\[
m=\max\left\{j\leq D\colon d_j\neq |W^0_j|\right\}.
\]
Note that $\sum_{i=1}^{m}|W^0_i|=\sum_{i=1}^{m}d_i$ by the choice of $m$, and $\sum_{i=1}^{m-1}|W^0_i|>\sum_{i=1}^{m-1}d_i$ by the majorization assumption. Therefore, $d_m>|W^0_m|$ and there exists
\[
k=\max\left\{j<m\colon |W^0_j|>d_j\right\}.
\]
Notice that $|W^0_m|<d_m\leq d_k<|W^0_k|$ implies $|W^0_m|+2\leq|W^0_{k}|$. 

We now have to consider two cases. First, if there is some element $w\in W^0_{k}$, which has disjoint support from every element in $W^0_m$, define $(W_i^1)_{i=1}^{D}$ by
\begin{align}\label{c1}
W_i^1=
\begin{cases}
W^0_k\setminus\{w\}& \text{if } i=k,\\
W^0_{m}\cup\{w\}& \text{if } i=m,\\
W^0_i &\text{else.}
\end{cases}
\end{align}
Second, suppose there is no such element in $W^0_k$. 
Partition $W_k^0\cup W_m^0$ into maximal chains $C_1,\ldots,C_r$, say.
%Pick any $w_1\in W_k^0$ and let $C_1$ be the maximal chain in 
%$W_k^0\cup W_m^0$ starting at $w_1$. No element of $C_1$ has a support which intersects the support of any 
%element of $(W_k^0\cup W_m^0)\setminus C_1$.
%As $|W^0_m|+2\leq|W^0_{k}|$, there exists some $w_2\in(W_k^0\cup W_m^0)\setminus C_1$. Let $C_2$ be the %maximal chain in $(W_k^0\cup W_m^0)\setminus C_1$ starting at $w_2$. We continue successively until we have %partitioned $W_k^0\cup W_m^0$ into maximal chains $C_1,\ldots,C_r$, say.
Note that for $i=1,\ldots,r$, the sizes of the sets $C_i\cap W_k^0$ and $C_i\cap W_m^0$ differ by at most one, since,
given $v_k\in W_k^0$ and $v_m\in W_m^0$, we know that $v_k$ and $v_m$ either have disjoint support, or their support sets have intersection of size one.
Since $|W^0_m|+2\leq|W^0_{k}|$, there is a maximal chain $C_j$ that contains one element more from $W_k^0$ than from $W_m^0$. Define $(W_i^1)_{i=1}^{D}$ by
\begin{align}\label{c2}
W_i^1=
\begin{cases}
(W^0_k\cup C_j)\setminus (C_j\cap W_k^0)& \text{if } i=k,\\
(W^0_m\cup C_j)\setminus (C_j\cap W_m^0)& \text{if } i=m,\\
W^0_i &\text{else.}
\end{cases}
\end{align}

In both of the above cases (\ref{c1}) and (\ref{c2}), we have defined
$(W_i^1)_{i=1}^{D}$ such that
\[
\sum_{i=1}^{D}||W^1_i|-d_i|<\sum_{i=1}^{D}||W^0_i|-d_i|.
\]
Note that $(W^1_i)_{i=1}^{D}$ satisfies the majorization condition in the sense that $(|W_n^1|)\succeq(d_n)$. Thus if the sets of $(W^1_i)_{i=1}^{D}$ do not span the desired fusion frame, we can repeat the above procedure with $(W^1_i)_{i=1}^{D}$ instead of $(W^0_i)_{i=1}^{D}$ and get $(W^2_i)_{i=1}^{D}$ such that $\sum_{i=1}^{D}||W^2_i|-d_i|<\sum_{i=1}^D||W^1_i|-d_i|$. Continuing in this fashion we will, say after repeating the process $l$ times, arrive at $(W^l_i)_{i=1}^{D}$ such that $\sum_{i=1}^{D}||W^l_i|-d_i|=0$, i.e. the sets of $(W^l_i)_{i=1}^{D}$ span the desired fusion frame $(W_n)_{n=1}^D$.
\end{proof}

Intuition suggests that for a given choice of eigenvalues, the dimensions derived from RFF for these eigenvalues in blockwise order, majorize the dimensions derived from RFF for the same eigenvalues in non-blockwise order. We do not investigate in this direction, since even two different blockwise orderings of the eigenvalues will in general lead to different sequences of dimensions of the reference fusion frame, as the following example shows. Given the eigenvalues  $\frac{5}{2},\frac{10}{3},\frac{13}{6}$, STC produces the synthesis matrix
\begin{align*}
F=[f_1\cdots f_8]=
\begin{bmatrix}
1&1&\sqrt{\frac{1}{4}}&\sqrt{\frac{1}{4}}&0&0&0&0\\
0&0&\sqrt{\frac{3}{4}}&-\sqrt{\frac{3}{4}}&1&\sqrt{\frac{5}{12}}&\sqrt{\frac{5}{12}}&0\\
0&0&0&0&0&\sqrt{\frac{7}{12}}&-\sqrt{\frac{7}{12}}&1
\end{bmatrix}
\end{align*} 
and thus the reference fusion frame for $(\frac{5}{2},\frac{10}{3},\frac{13}{6})$ is
\begin{align*}
V=(\spann\{f_1,f_5,f_8\},\spann\{f_2,f_6\},\spann\{f_3\},\spann\{f_4\},\spann\{f_7\}).
\end{align*}
Running RFF for a different blockwise ordering of the same eigenvalues, say $\frac{10}{3},\frac{5}{2},\frac{13}{6}$, yields the synthesis matrix
\begin{align*}
G=[g_1\cdots g_8]=
\begin{bmatrix}
1&1&1&\sqrt{\frac{1}{6}}&\sqrt{\frac{1}{6}}&0&0&0\\
0&0&0&\sqrt{\frac{5}{6}}&-\sqrt{\frac{5}{6}}&\sqrt{\frac{5}{12}}&\sqrt{\frac{5}{12}}&0\\
0&0&0&0&0&\sqrt{\frac{7}{12}}&-\sqrt{\frac{7}{12}}&1
\end{bmatrix}
\end{align*} 
and thus the reference fusion frame for $(\frac{10}{3},\frac{5}{2},\frac{13}{6})$ is
\begin{align*}
W=(\spann\{g_1,g_6\},\spann\{g_2,g_7\},\spann\{g_3,g_8\},\spann\{g_4\},\spann\{g_5\}).
\end{align*}
Note that RFF is in both cases performed for a blockwise ordering of the given eigenvalues, yet the sequences of dimensions of the subspaces of the reference fusion frames $V$ and $W$ are different.  

In the case of constructing tight spectral tetris frames, questions of how to order the eigenvalues before performing
the algorithm do not arise. Nevertheless, for
the construction of tight spectral tetris frames of redundancy less than two, the majorization condition presented in theorem~\ref{nontight} 
is not necessary either. Consider for example the case of a $10$-element unit norm tight frame in $\CC^7$. The synthesis
matrix TDFTST constructs, is composed of two blocks $D_2$ followed by two blocks $D_3$ and the reference fusion frame derived from it has
dimensions $(2,2,2,2,1,1)$. We might on the other hand construct a synthesis matrix for a 
$10$-element unit norm tight frame in $\CC^7$ by building its synthesis matrix using a $D_2$ block, followed by a $D_3$ block, followed by another $D_2$ and $D_3$ block. The reference fusion frame derived from this synthesis matrix has 
dimensions $(2,2,2,2,2)$.

We now show, that for constructing tight fusion frames of redundancy greater or equal to two, the majorization condition is also sufficient.
In this situation, again, questions of how to order the eigenvalues do not arise. Moreover, the 
vectors in the spanning sets that RFF produces, are either standard unit vectors or linear combinations of two consecutive standard unit vectors.

\begin{definition}
We call any standard unit vector $e_i\in\RR^N$ a \emph{singleton}. We call a linear combination $ae_i+be_{i+1}$ of two consecutive singletons a \emph{doubleton} and denote it by $e'_i$.
\end{definition}

By the structure of the synthesis matrix that spectral tetris produces and the fact that RFF sweeps through the synthesis matrix from left to right in order to fill the spanning sets of the reference fusion frame, we see that RFF picks singletons first, i.e. if for some $i=1,\ldots,N-1$  we have $e_i\in S_{n_1}$ and $e'_i\in  S_{n_2}$, then $n_1<n_2$. 

\begin{theorem}
Let $M\geq 2N$ be positive integers and $(d_n)_{n=1}^D\subseteq\NN$ such that $\sum_{n=1}^Dd_n=M$. Let $(V_n)_{n=1}^t$ be the reference fusion frame for $(\lambda_n)_{n=1}^N=(\frac{M}{N},\ldots,\frac{M}{N})$. Then there exists a tight spectral tetris fusion frame $(W_n)_{n=1}^D$ for $\mathbb{R}^N$ with $\text{dim }W_n=d_n$ for $n=1,\ldots,D$ if and only if $(\dim V_n)\succeq(d_n)$.
\end{theorem}

\begin{proof}
By Proposition~\ref{nontight} it remains to show that the majorization condition is necessary. To show this, let \smash{$(S_n)_{n=1}^t$} be the spanning sets of the reference fusion frame, and suppose 
\smash{$(F_n)_{n=1}^D$} is some other partition of the frame vectors STC generates for $(\lambda_n)_{n=1}^N$ into sets of orthogonal vectors. It suffices to define a sequence of spanning sets of fusion frames \smash{$(S^j_n)_{n=1}^{D_j}$}, $j=1,\ldots,r$ for some $r\in\NN$, such that \smash{$(S^0_n)_{n=1}^{D_0}=(F_n)_{n=1}^D$}, $(S^r_n)_{n=1}^{D_r}=(S_n)_{n=1}^t$ and $(|S^{j+1}_n|)\succeq(|S^j_n|)$ for all $j=0,\ldots,r-1$. We show how to construct such a sequence successively, i.e. we describe how to construct  \smash{$(S^{j+1}_n)_{n=1}^{D_{j+1}}$} from  \smash{$(S^j_n)_{n=1}^{D_j}$}. 

So let \smash{$(S^0_n)_{n=1}^{D_0}=(F_n)_{n=1}^D$},  fix $j$ and suppose  \smash{$(S^0_n)_{n=1}^{D_0},\ldots,(S^j_n)_{n=1}^{D_j}$} have already been constructed. If \smash{$(S^j_n)_{n=1}^{D_j}=(S_n)_{n=1}^t$}, then $j=r$ and we are done. Thus, we may assume that there exists
\[
n_0=\min\{n\leq D_j\colon S_n^j\neq S_n\}.
\]
Let $m_0$ be the minimal integer in $\{1,\ldots,N\}$, for which $S_{n_0}$ and $S_{n_0}^j$ differ, i.e. for which one of the following holds:
\begin{itemize}
\item[(i)] No vector in $S_{n_0}$  is supported at $m_0$, but some vector in $S_{n_0}^j$ is supported at $m_0$.
\item[(ii)] $e_{m_0}\in S_{n_0}$, but $e_{m_0}\not\in S_{n_0}^j$.
\item[(iii)] $e'_{m_0}\in S_{n_0}$, but $e'_{m_0}\not\in S_{n_0}^j$.
%\item[(iv)] $e'_{m_0-1}\in S_{n_0}$, but $e'_{m_0-1}\not\in S_{n_0}^j$.
\end{itemize}
In each of these three cases we will describe how to construct  \smash{$(S^{j+1}_n)_{n=1}^{D_{j+1}}$} from 
\smash{$(S^j_n)_{n=1}^{D_j}$}, 
such that $(|S^{j+1}_n|)\succeq(|S^j_n|)$ and $S^{j+1}_{n_0}$ and  $S_{n_0}$ either both contain no vector supported at $m_0$ or the same vector supported at $m_0$. Note that by iterating this  procedure, we will thus, after a finite number of steps, arrive at some \smash{$(S^r_n)_{n=1}^{D_r}$} which is identical to \smash{$(S_n)_{n=1}^t$}. We now go through the three cases.

Case (i): This case can actually not occur, i.e. it is not possible that $S_{n_0}$ contains no vector supported at $m_0$ but that $S^j_{n_0}$ does. Indeed,  $S^j_{n_0}$ cannot contain a doubleton $e'_{m_0-1}$ by the minimality of $m_0$, and it cannot contain a doubleton $e'_{m_0}$ or the singleton $e_{m_0}$, since then RFF would have also picked just the same doubleton, or singleton respectively, for $S_{n_0}$.

Case (ii): Note that by the minimality of $m_0$, the reason for this case cannot be that $S^j_{n_0}$ contains a doubleton $e'_{m_0-1}$. Thus, there are two subcases which could have created case (ii).
\begin{itemize}
\item[(ii-a)] $S^j_{n_0}$ does not contain any vector supported at  $m_0$.
\item[(ii-b)] $S^j_{n_0}$ contains a doubleton $e'_{m_0}$.
\end{itemize}

Case (ii-a): There must be some $k> n_0$, such that $e_{m_0}\in S^j_k$ and we define
\smash{$(S_n^{j+1})_{n=1}^{D_{j+1}}$} by letting 
\begin{align*}
S^{j+1}_{i}=
\begin{cases}
S^{j}_{n_0}\cup\{e_{m_0}\}& \text{if } i=n_0,\\
S^{j}_{k}\setminus\{e_{m_0}\}& \text{if } i=k,\\
S^{j}_{n}& \text{else.} 
\end{cases}
\end{align*}

Case (ii-b): Again, identify the $k> n_0$, for which  $e_{m_0}\in S^j_k$. Now,  one of three things can happen. 

First, if $S^j_k$ contains the singleton $e_{m_0+1}$, define \smash{$(S_n^{j+1})_{n=1}^{D_{j+1}}$} by
letting 
\begin{align}\label{switching}
S^{j+1}_{i}=
\begin{cases}
(S^{j}_{n_0}\setminus\{e'_{m_0}\})\cup\{e_{m_0},e_{m_0+1}\}& \text{if } i=n_0,\\
(S^{j}_{k}\setminus\{e_{m_0},e_{m_0+1}\})\cup\{e'_{m_0}\}& \text{if } i=k,\\
S^{j}_{n}& \text{else.} 
\end{cases}
\end{align}
(To shorten our notation, in what follows we will use the following terminology for how we constructed \smash{$(S_n^{j+1})_{n=1}^{D_{j+1}}$} in
(\ref{switching}). We say we constructed  \smash{$(S_n^{j+1})_{n=1}^{D_{j+1}}$}from  \smash{$(S_n^{j})_{n=1}^{D_{j}}$} by switching $e'_{m_0}$ with $\{e_{m_0},e_{m_0+1}\}$ between $S^{j}_{n_0}$ and $S^{j}_k$.) 

Second, if $S^j_k$ contains no vector supported at $m_0$, define \smash{$(S_n^{j+1})_{n=1}^{D_{j+1}}$} by switching $e'_{m_0}$ with $e_{m_0}$ between $S^{j}_{n_0}$ and $S^{j}_k$. 

Third, $S^j_k$ contains a doubleton $e'_{m_0+1}$. This third subcase again has three subcases. The first being that $S^j_{n_0}$ contains $e_{m_0+2}$. In this subcase, we construct \smash{$(S_n^{j+1})_{n=1}^{D_{j+1}}$} by switching  $\{e'_{m_0}, e_{m_0+2}\}$ with $\{e_{m_0}, e'_{m_0+1}\}$ between $S^{j}_{n_0}$ and $S^{j}_k$. The second being that $S^j_{n_0}$ contains no vector supported at $m_0+2$. In this subcase, we construct \smash{$(S_n^{j+1})_{n=1}^{D_{j+1}}$} by switching  $e'_{m_0}$ with $\{e_{m_0}, e'_{m_0+1}\}$ between $S^{j}_{n_0}$ and $S^{j}_k$. The third being that $S^j_{n_0}$ contains a doubleton $e'_{m_0+2}$. This third subcase will again have three sub-subcases, namely that $S^j_k$ contains $e_{m_0+3}$, no vector supported at  $m_0+3$ or a doubleton $e'_{m_0+3}$. In the first two of those sub-subcases we can again define \smash{$(S_n^{j+1})_{n=1}^{D_{j+1}}$} by switching vectors in the above fashion and the third sub-subcase will create three new further cases. We can continue our argument successively,  considering three new subcases. Eventually, we must arrive at a point where we can define \smash{$(S_n^{j+1})_{n=1}^{D_{j+1}}$} by switching vectors, since we only deal with a finite number of vectors.

Case (iii): In this case it is not possible that $S^j_{n_0}$ contains $e_{m_0}$, since RFF picks singletons first and thus we would have $e_{m_0}\in S_{n_0}$ instead of $e'_{m_0}\in S_{n_0}$. By the minimality of $m_0$, it is also not possible for $S^j_{n_0}$ to contain some $e'_{m_0-1}$. Hence, $S^j_{n_0}$ contains no vector supported at $m_0$. There are three subcases which could occur under this circumstances.
\begin{itemize}
\item[(iii-a)] $S^j_{n_0}$ does not contain any vector supported at  $m_0+1$.
\item[(iii-b)] $e_{m_0+1}\in S^j_{n_0}$.
\item[(iii-c)] $e'_{m_0+1}\in S^j_{n_0}$.
\end{itemize}

Case (iii-a): Identify $k>n_0$ such that $e'_{m_0}\in S^j_k$ and define  
\smash{$(S_n^{j+1})_{n=1}^{D_{j+1}}$} by letting 
\begin{align*}
S^{j+1}_{i}=
\begin{cases}
S^{j}_{n_0}\cup\{e'_{m_0}\}& \text{if } i=n_0,\\
S^{j}_{k}\setminus\{e'_{m_0}\}& \text{if } i=k,\\
S^{j}_{n}& \text{else.} 
\end{cases}
\end{align*}

Case (iii-b): Identify $k> n_0$ such that $e'_{m_0}\in S^j_k$ and define 
\smash{$(S_n^{j+1})_{n=1}^{D_{j+1}}$} by switching $e_{m_0+1}$ with $e'_{m_0}$ between $S^{j}_{n_0}$ and $S^{j}_k$. 

Case (iii-c): This subcase has three new subcases in the same fashion as above, two of which lead to the definition on  \smash{$(S_n^{j+1})_{n=1}^{D_{j+1}}$} by switching vectors and the third of which again leads to three new subcases. So again we can continue our argument successively, considering three new subcases potentially over and over again, until eventually we arrive at a point where we can define \smash{$(S_n^{j+1})_{n=1}^{D_{j+1}}$} by switching vectors, since we only deal with a finite number of vectors.
\end{proof}

In the trivial case of integer eigenvalues we can run STC and make the following observation.

\begin{corollary}
If $(\lambda_n)_{n=1}^N\subseteq\NN$ and $(d_i)_{i=1}^D\subseteq\NN$ such that $\sum_{n=1}^N\lambda_n=\sum_{n=1}^Dd_n=M\in\NN$, where $M\geq N$, then a spectral tetris fusion frame with eigenvalues $(\lambda_n)_{n=1}^N$ and dimensions $(d_n)_{n=1}^D$ exists if and only if  $(a_n)\succeq(d_n)$, where $a_n=\max\{r\colon \lambda_r\geq n\}$ for $n=1,\ldots,\max_{i=1,\ldots,N}\lambda_i$.
\end{corollary}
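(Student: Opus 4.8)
The plan is to reduce the corollary to a purely combinatorial statement about partitioning a multiset of standard unit vectors, and then read off both directions from the observation that $(a_n)$ is exactly the dimension sequence of the reference fusion frame. First I would note that for integer eigenvalues $\lambda_1\geq\cdots\geq\lambda_N$ the algorithm STC never enters its $\lambda_j<1$ branch: each $\lambda_j$ is decremented by one until it reaches zero, so the output frame $F$ consists of precisely $\lambda_j$ copies of the singleton $e_j$ for $j=1,\ldots,N$, with no doubletons at all. Running RFF on $F$, the $i$-th copy of each $e_j$ is placed into $S_i$ (disjointness of supports never obstructs an assignment), so $e_j\in S_i$ if and only if $\lambda_j\geq i$, whence $\dim V_i=|S_i|=|\{j\colon\lambda_j\geq i\}|=a_i$ for $i=1,\ldots,\max_j\lambda_j=t$. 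Thus $(\dim V_n)=(a_n)$, and the corollary becomes Theorem~\ref{nontight} together with its converse in this special setting.

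Given this identification, sufficiency is immediate: if $(a_n)\succeq(d_n)$ then $(\dim V_n)\succeq(d_n)$, so Theorem~\ref{nontight} produces a spectral tetris fusion frame for $\RR^N$ with eigenvalues $(\lambda_n)$ and dimensions $(d_n)$.

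For necessity I would argue directly by counting. Any spectral tetris fusion frame with these integer eigenvalues arises by partitioning the singleton frame $F$ into orthonormal sets $W_1,\ldots,W_D$ with $|W_n|=d_n$, where orthonormality of $W_n$ simply means that it contains at most one copy of each $e_j$. Fixing $m$ and considering the $m$ largest of these sets, at most $\min(\lambda_j,m)$ copies of $e_j$ can lie in their union, since only $\lambda_j$ copies exist and each of the $m$ sets absorbs at most one. Hence the total cardinality of the $m$ largest sets is at most $\sum_{j=1}^N\min(\lambda_j,m)$, and because $\min(\lambda_j,m)$ counts exactly the indices $i\leq m$ with $\lambda_j\geq i$, this sum equals $\sum_{i=1}^m|\{j\colon\lambda_j\geq i\}|=\sum_{i=1}^m a_i$. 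As $(a_n)$ is already in decreasing order and $\sum_n d_n=\sum_n a_n=M$, the inequalities $\sum_{n=1}^m d_n^{\downarrow}\leq\sum_{i=1}^m a_i$ for all $m$ are precisely the majorization $(a_n)\succeq(d_n)$. This is nothing but the elementary necessity half of the Gale--Ryser theorem, whose sufficiency half is exactly what Theorem~\ref{nontight} supplies.

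The one point requiring care, and the only genuine obstacle, is the reduction to the singleton frame $F$: I must ensure the bound covers \emph{every} admissible spectral tetris fusion frame, not only the one built from $F$. This is justified by the fact that STC, and equally DFTST whose set $S$ always contains $1$ when the $\lambda_j$ are positive integers, produces only singletons; a frame assembled from genuine DFT blocks can only split dimensions further, since a doubleton occupies two coordinates while contributing a single dimension and therefore cannot push any top-$m$ partial sum above $\sum_{i=1}^m a_i$. Making this monotonicity precise is the only step that demands attention beyond the routine counting above.
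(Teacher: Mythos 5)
Your proposal is correct, and its first half coincides with the paper's own proof: the paper's entire argument for this corollary is the observation that RFF applied to the all-singleton STC frame yields $t=\max_i\lambda_i$ and $\dim V_i=a_i$ for $i=1,\dots,t$, after which sufficiency is exactly Theorem~\ref{nontight} --- precisely your reduction. Where you genuinely diverge is the necessity half. The paper never writes it out: it is carried implicitly by the switching argument of the preceding theorem (which is stated only for tight frames with $M\geq 2N$) and by the informal discussion of the example $(4,4,3,3,2,2)$ that opens Section~\ref{greater2}. Your Gale--Ryser counting --- the union of the $m$ largest orthonormal sets contains at most $\min(\lambda_j,m)$ copies of $e_j$, and $\sum_{j}\min(\lambda_j,m)=\sum_{i=1}^m a_i$ --- is more elementary and self-contained than the paper's partition-transformation machinery, and it is correct; it buys a necessity proof that does not depend on tightness or on $M\geq 2N$.

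The one step you flag as incomplete can be closed outright, and by the same counting, so it is not a genuine gap. The paper sidesteps the issue by fiat (``in the trivial case of integer eigenvalues we can run STC''), and your observation that DFTST also outputs only singletons when all $\lambda_j\in\NN$ (since $1\in S$ throughout, and the partial sums in line 5 are integers) already covers the algorithmic constructions. For the loose extended definition allowing genuine DFT blocks, replace the counting of ``copies of $e_j$'' by weights: every spectral tetris synthesis matrix, by construction, has pairwise orthogonal rows whose squared norms are the eigenvalues, so $\sum_{w\in F}|\langle w,e_j\rangle|^2=\lambda_j$, while Bessel's inequality gives $\sum_{w\in W}|\langle w,e_j\rangle|^2\leq 1$ for every orthonormal set $W$. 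Hence for any $m$ sets of a partition of the frame into orthonormal sets,
\[
\sum_{i=1}^m |W_i|
=\sum_{j=1}^N\sum_{i=1}^m\sum_{w\in W_i}|\langle w,e_j\rangle|^2
\leq\sum_{j=1}^N\min(m,\lambda_j)
=\sum_{i=1}^m a_i,
\]
where the inequality uses Bessel for the bound $m$ and disjointness of the $W_i$ inside the frame for the bound $\lambda_j$. This is the required majorization verbatim, with no ``monotonicity of splitting'' lemma needed; substituting this two-line estimate for your heuristic closing remark makes the proof complete.
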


\begin{proof}
Note that in this case RFF produces the output $t=\max_{i=1,\ldots,N}\lambda_i$ and that $\dim V_i=a_i$ for $i=1,\ldots,t$.
\end{proof}


\begin{thebibliography}{99}

\bibitem{Bod07}
B.~G.~Bodmann,
{\em Optimal Linear Transmission by Loss-Insensitive Packet Encoding},
Appl.~Comput.~Harmon.~Anal.~{\bf 22(3)} (2007), 274--285.


\bibitem{CFMPS11} J.~Cahill, M.~Fickus, D.~G.~Mixon, M.~Poteet, and N.~K.~Strawn,
 {\em Constructing finite frames of a given spectrum and set of lengths}, in submission.

\bibitem{CCHKP10} R.~Calderbank, P.~G.~Casazza, A.~Heinecke, G.~Kutyniok, and A.~Pezeshki,
 {\em Sparse fusion frames: existence and construction}, Adv.~Comput.~Math.~{\bf 35(1)} (2011), 1--31.


\bibitem{CHKWZ11}
P.~G.~Casazza, A.~Heinecke, K.~Kornelson, Y.~Wang, Z.~Zhou,
{\em Necessary and sufficient conditions to perform spectral tetris}, preprint.

\bibitem{CHKK10}
P.~G.~Casazza, A.~Heinecke, F.~Krahmer, and G.~Kutyniok,
 {\em Optimally sparse frames}, IEEE Trans. Inform. Theory 57 (2011), 7279--7287. 

\bibitem{CFMWZ09}
P.~G.~Casazza, M.~Fickus, D.~Mixon, Y.~Wang, and Z.~Zhou,
 {\em Constructing tight fusion frames}, Appl.~Comput.~Harmon.~Anal.~{\bf 30} (2011) 175--187.

\bibitem{CK04}
P.~G.~Casazza and G.~Kutyniok, {\em Frames of subspaces}, Wavelets, frames and operator theory,
Contemp.~Math., vol.~345, Amer.~Math.~Soc., Providence, RI, 2004, 87--113.

\bibitem{CK07b}
P.~G.~Casazza and G.~Kutyniok,
{\em Robustness of Fusion Frames under Erasures of Subspaces and of Local Frame Vectors},
Radon transforms, geometry, and wavelets (New Orleans, LA, 2006), 149--160, Contemp.~Math.~{\bf 464}, Amer.
Math.~Soc., Providence, RI, 2008.

\bibitem{CKL08}
P.~G.~Casazza, G.~Kutyniok, and S.~Li, {\em Fusion frames and distributed processing}, Appl.
Comput.~Harmon.~Anal.~{\bf 25(1)} (2008), 114--132.

\bibitem{KPCL08}
G.~Kutyniok, A.~Pezeshki, A.~R.~Calderbank, and T.~Liu,
{\em Robust Dimension Reduction, Fusion Frames, and Grassmannian Packings},
Appl.~Comput.~Harmon.~Anal.~{\bf 26(1)} (2009), 64--76.

\bibitem{MRS10}
P.~G.~Massey, M.~A.~Ruiz, and D.~Stojanoff,
{\em The Structure of Minimizers of the Frame Potential on Fusion Frames},
J.~Fourier Anal.~Appl.~{\bf 16} (2010), 514--543.

\bibitem{PKC08}
A.~Pezeshki, G.~Kutyniok, and R.~Calderbank,
{\em Fusion frames and Robust Dimension Reduction,}
42nd Annual Conference on Information Sciences and Systems (CISS),
Princeton University, Princeton, NJ, Mar.~19--21, 2008.

\end{thebibliography}
\end{document}